%-----------------------------------------------------------------------
% Beginning of article.tex
%-----------------------------------------------------------------------
%
% AMS-LaTeX 1.2 sample file for book proceedings, based on amsproc.cls.
%
% Replace amsproc by the documentclass for the target series, e.g. pspum-l.amsfonts,amscd,amssymb,
%
%\documentclass{amsproc}

\documentclass[reqno,11pt]{amsart}
\usepackage{epsfig,amscd,amssymb,amsmath,amsfonts}
\usepackage{lscape}
\usepackage{amsmath}
\usepackage{graphicx}
\usepackage{amsthm,color}
\usepackage{tikz}
\usepackage{float}
\usepackage{xparse}

\usetikzlibrary{calc}
\usetikzlibrary{shapes.geometric}
\usetikzlibrary{shapes,positioning,intersections,quotes}
%\usetikzlibrary{graphdrawing.trees}
\usetikzlibrary{graphs}
\usetikzlibrary{graphs,quotes}
\usetikzlibrary{decorations.pathmorphing}
\usetikzlibrary{decorations.markings}

\tikzset{snake it/.style={decorate, decoration=snake}}
\tikzset{snake it/.style={decorate, decoration=snake}}

\usetikzlibrary{decorations.pathreplacing,decorations.markings,snakes}
\usetikzlibrary{backgrounds}

\newtheorem{theorem}{Theorem}[section]
\newtheorem{lemma}[theorem]{Lemma}

\theoremstyle{definition}
\newtheorem{definition}[theorem]{Definition}
\newtheorem{corollary}[theorem]{Corollary}

\newtheorem{example}[theorem]{Example}

\theoremstyle{remark}
\newtheorem{remark}[theorem]{Remark}

\numberwithin{equation}{section}

\usepackage[margin=1.05in]{geometry}
\usepackage[colorlinks]{hyperref}
\usepackage{siunitx}

%\makeatletter
%\let\@wraptoccontribs\wraptoccontribs
%\makeatother

%    Blank box placeholder for figures (to avoid requiring any
%    particular graphics capabilities for printing this document).

%\pagestyle{empty}
\begin{document}

\title[An acyclic $d$-partition of the $r$-uniform complete hypergraph $K_{rd}^{(r)}$]{An acyclic $d$-partition of the $r$-uniform complete hypergraph $K_{rd}^{(r)}$}

%    Information for first author

\author{Ayako Carter}
%    Address of record for the research reported here
\address{Department of Mathematics and Statistics, Bowling Green State University, Bowling Green, OH 43403 }

%    Current address
%\curraddr{}
\email{ayakoc@bgsu.edu}

%    Information for second author

\author{Eric Montoya}
%    Address of record for the research reported here
\address{Department of Mathematics and Statistics, Bowling Green State University, Bowling Green, OH 43403 }

%    Current address
%\curraddr{}
\email{emontoy@bgsu.edu}

%	Information for third author

\author{Mihai D. Staic}
%    Address of record for the research reported here
\address{Department of Mathematics and Statistics, Bowling Green State University, Bowling Green, OH 43403 }
\address{Institute of Mathematics of the Romanian Academy, PO.BOX 1-764, RO-70700 Bu\-cha\-rest, Romania.}

%    Current address
%\curraddr{}
\email{mstaic@bgsu.edu}

%    General info

\subjclass[2020]{Primary  05C65, Secondary  05C70, 15A15}
%\date{January 1, 1994 and, in revised form, June 22, 1994.}

\keywords{partitions of hypergraphs, Betti numbers}

\begin{abstract} In this paper we introduce a $d$-partition $\mathcal{E}_d^{(r)}=(\Omega_1^{(r,d)}, \Omega_2^{(r,d)},\dots, \Omega_d^{(r,d)})$ of the $r$-uniform  complete hypergraph $K_{rd}^{(r)}$. We prove that $\mathcal{E}_d^{(r)}$ is homogeneous and  that each hypergraph $\Omega_i^{(r,d)}$ is acyclic (i.e. has zero Betti numbers). As an application, we show that the map $det^{S^r}$ is nontrivial for every $r$, which gives a partial answer to a conjecture from \cite{ls2}.

\end{abstract}
\maketitle

%$$\tens_{i,j}$$  $$\tensw_{1\leq i,j\leq n}$$ 

%\section*{This is an unnumbered first-level section head}

%
%%%%%%%%%%%%%%%%%%%%%%%%%%%%%%%%%%%%%%%%%%%%%%%%%%%

%%%%%%%%%%%%%%%%%%%%%%%%%%%%%%%%%%%%%%%%%%%%%%%%%%%%
\section{Introduction}

It is well known that the complete graph $K_{2d}$ can be partitioned into $d$ spanning trees. Moreover,  one can choose those spanning trees  to be isomorphic to the twin-star on $2d$ vertices. In this paper we study an analogous problem for the $r$-uniform complete hypergraph $K_{rd}^{(r)}$.

Partitions of graphs and hypergraphs have been intensively studied over the years (for some examples see \cite{an,frc,ckv,ct,cp,lmt,sch}).   
When generalizing spanning tree partitions from graph to hypergraphs, one issue is finding the proper definition of a tree in the context of hypergraphs.  For some accounts on this problem one can check \cite{geo,gk,rrs,seb}. For our purposes, the right approach is suggested by Kalai's  results on acyclic simplicial complexes \cite{ka}. Having that paper as a model, one can define  an acyclic hypergraph as an $r$-uniform  hypergraph that contains all possible $(r-1)$-faces, ${\displaystyle \binom{rd-1}{r-1}}$ hyperedges, and its Betti numbers are all equal to zero.  Hence, an acyclic partition is a partition in which all the component hypergraphs are acyclic (see Section \ref{section2} for the precise definition). These acyclic partitions where first considered in \cite{ls2} in relation to a certain determinant-like map $det^{S^r}$. The main result of this paper is proving that for every $r$ and $d$ there exists an acyclic $d$-partition $\mathcal{E}_d^{(r)}$  of the $r$-uniform complete hypergraph $K_{rd}^{(r)}$.

To make the paper self-contained, in Section \ref{section2} we recall a few definitions and properties of hypergraphs, partitions of hypergraphs, simplicial complexes, and Betti numbers. We also briefly recall the connection between $d$-partitions of the $r$-uniform complete hypergraph $K_{rd}^{(r)}$ and the map $det^{S^r}$. 

In Section \ref{section3} we introduce $\mathcal{E}_d^{(r)}=(\Omega_1^{(r,d)}, \Omega_2^{(r,d)},\dots, \Omega_d^{(r,d)})$, a $d$-partition of the $r$-uniform complete hypergraph $K_{rd}^{(r)}$. When $r=2$ one recovers the 
well-known partition of the complete graph $K_{2d}$ as a union of twin-star graphs. We show that $\mathcal{E}_d^{(r)}$ is homogeneous, and that $\Omega_i^{(r,d)}$ and $\Omega_j^{(r,d)}$ are isomorphic for all $1\leq i<j\leq d$. We also introduce a family of $r$-uniform hypergraphs  $\Gamma_{k,r}(j_{k+1},\dots,j_r)$, and show that $\Omega_1^{(r,d)}$ can be written as their union.  

In Section \ref{section4} we introduce the leaf-equivalence relation among hypergraphs, and show that $\Gamma_{k,r}(j_{k+1},\dots,j_r)$ is leaf-equivalent to the empty $r$-uniform hypergraph.  In Section \ref{section5} we prove Theorem \ref{Th17r}, the main result of this paper, which states that the $d$-partition $\mathcal{E}_d^{(r)}$ is acyclic.  As an application, we show that the map $det^{S^r}$ is nontrivial for all $r$ and $d$, which gives a partial answer to a conjecture from \cite{ls2}.

\section{Preliminary}

\label{section2}

%In this paper $k$ is a field and $V_d$ is a vector space of dimension $d$ with a basis $\{e_1,e_2,\dots,e_d\}$. $K_n$ is the complete graph with $n$ vertices and $\lfloor x \rfloor$ is the floor of $x$. 
\subsection{Partitions of Hypergraphs}
We recall from \cite{bretto,ka,ls2} a few definitions and results about hypergraphs, partitions, simplicial complexes, and Betti numbers.   
\begin{definition} 
(1) A hypergraph $\mathcal{H}=(V, E)$ consists of two finite sets  $V = \{v_1, v_2, \dots , v_n\}$ called  the set of vertices, and  $E=\{E_1, E_2, ... , E_m\}$  a family of subsets of $V$ called the hyperedges of $\mathcal{H}$. \\
(2) If $\mathcal{H}=(V, E)$ is a hypergraph such that every hyperedge of $\mathcal{H}$ has cardinality $r$, then $\mathcal{H}$ is called an $r$-uniform hypergraph.\\
(3) For $2 \leq r \leq n$, we define the $r$-uniform complete hypergraph to be the hypergraph $K_{n}^{(r)}= (V, E)$ for which $V=\{1,2,\dots ,n\}$, and  $E$ is the family of all $r$-element subsets of $V$.

\end{definition} 

\begin{remark}
A $2$-uniform hypergraph is nothing else but a graph, and $K_n^{(2)}$ is the complete graph $K_n$.
\end{remark}

\begin{remark} To prove certain results, it is convenient to order the elements of the set $V$, and denote a hyperedge $\sigma=\{v_1,v_2,\dots,v_r\}$  by $(v_1,v_2,\dots,v_r)$ where $v_1<v_2<\dots<v_r$. We will use both notations as needed. Since we do not distinguish among permutations of the set $\sigma$, this should not create any confusion. 
\end{remark}

\begin{definition}  A hyperedge $d$-partition of $K_{rd}^{(r)}$ is an ordered collection $\mathcal{P}=(\mathcal{H}_1,\mathcal{H}_2,...,\mathcal{H}_d)$ of sub-hypergraphs $\mathcal{H}_i$ of  $K_{rd}^{(r)}$ such that:\\
(i) ${\displaystyle V(\mathcal{H}_i)=V(K_{rd}^{(r)})}$ for all $1\leq i \leq d$,  \\
(ii) ${\displaystyle E(\mathcal{H}_i)\bigcap E(\mathcal{H}_j)=\emptyset}$ for all $1\leq i\neq j\leq d$, \\
(iii) ${\displaystyle \bigcup_{i=1}^dE(\mathcal{H}_i)=E(K_{rd}^{(r)})}$. 
%We denote by $\mathcal{P}_d(K_n^{(r)})$ the set of all hyperedge $d$-partitions of $K_{rd}^{(r)}$.
\end{definition}

\begin{example} For examples of partitions of the complete graph $K_{2d}=K_{2d}^{(2)}$ one can check \cite{fls,lss}.  In Figure \ref{fig1} we have a $2$-partition 
of the $3$-uniform complete hypergraph $K_{6}^{(3)}$. See also Example \ref{exampleG23} from Section \ref{section3} where this partition is denoted $(\Omega_{1}^{(3,2)},\Omega_{2}^{(3,2)})$.  

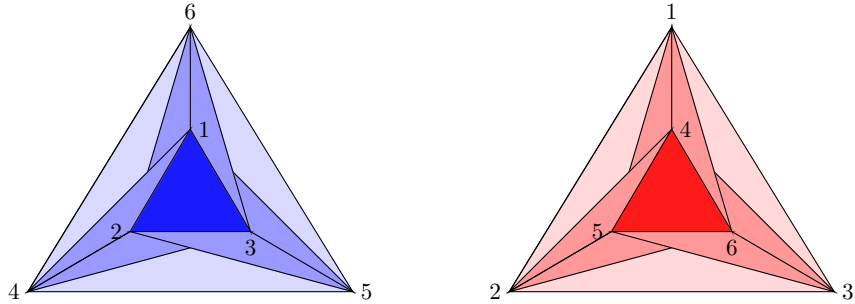
\begin{figure}[h!]
 \centering
 \begin{tikzpicture}[scale=0.8,every node/.style={scale=0.8}]
 %%%%%%%%%%%%%%%%%%%
 % 
%%%%%%%%%%%%%%%
%%%%%%%%%%%%%%%%
%%%%BLUE
%%%%%%%%%%%%
 \draw[fill, color=blue!90]
(1,1.7) -- (2,0) -- (0,0) -- cycle;

\draw[fill, color=blue!15]
(3.7,-1) -- (2,0) -- (1,3.4) -- cycle;
\draw[fill, color=blue!40]
(1,1.7) -- (2,0) -- (3.7,-1) -- cycle;

\draw (1,1.7) node[below] {} -- (2,0)
node[left] {} -- (3.7,-1)
node[left] {} -- cycle;

\draw[fill, color=blue!40]
(1,1.7) -- (2,0) -- (1,3.4) -- cycle;

\draw[fill, color=blue!15]
(1,3.4) -- (0,0) -- (-1.7,-1) -- cycle;
\draw[fill, color=blue!40]
(1,3.4) -- (0,0) -- (1,1.7) -- cycle;

\draw (-1.7,-1) node[below] {} -- (0,0)
node[left] {} -- (1,3.4)
node[left] {} -- cycle;

\draw[fill, color=blue!40]
(-1.7,-1) -- (0,0) -- (1,1.7) -- cycle;

\draw[fill, color=blue!15]
(-1.7,-1) -- (2,0) -- (3.7,-1) -- cycle;

\draw[fill, color=blue!40]
(0,0) -- (2,0) -- (-1.7,-1) -- cycle;

\draw (0,0) node[below] {} -- (2,0)
node[left] {} -- (-1.7,-1)
node[left] {} -- cycle;

\draw[fill, color=blue!40]
(2,0) -- (0,0) -- (3.7,-1) -- cycle;

	\draw (0,0) node[below] {} -- (2,0)
node[below] {$3$} -- (3.7,-1)
node[right] {$5$} -- cycle;

	\draw (1,1.7) node[below] {} -- (2,0)
node[right] {} -- (1,3.4)
node[above] {$6$} -- cycle;

\draw (1,1.7) node[right] {$1$} -- (0,0)
node[left] {$2$} -- (-1.7,-1)
node[left] {$4$} -- cycle;

\draw (-1.7,-1) node[below] {} -- (3.7,-1)
node[right] {} -- (1,3.4)
node[left] {} -- cycle;
	
%%%%%%%%%%%%%%%
%%%%%%%%%%%%%%%%
%%%%RED
%%%%%%%%%%%%

\draw[fill, color=red!15]
(11.7,-1) -- (10,0) -- (9,3.4) -- cycle;

\draw[fill, color=red!40]
(9,1.7) -- (10,0) -- (11.7,-1) -- cycle;

\draw (9,1.7) node[below] {} -- (10,0)
node[left] {} -- (11.7,-1)
node[left] {} -- cycle;

\draw[fill, color=red!15]
(9,3.4) -- (10,0) -- (6.3,-1) -- cycle;
\draw[fill, color=red!40]
(9,3.4) -- (8,0) -- (9,1.7) -- cycle;

\draw (6.3,-1) node[below] {} -- (8,0)
node[left] {} -- (9,3.4)
node[left] {} -- cycle;

\draw[fill, color=red!40]
(6.3,-1) -- (8,0) -- (9,1.7) -- cycle;

\draw[fill, color=red!15]
(6.3,-1) -- (10,0) -- (11.7,-1) -- cycle;

\draw[fill, color=red!40]
(8,0) -- (10,0) -- (6.3,-1) -- cycle;

\draw (8,0) node[below] {} -- (10,0)
node[left] {} -- (6.3,-1)
node[left] {} -- cycle;

\draw[fill, color=red!40]
(10,0) -- (8,0) -- (11.7,-1) -- cycle;

 \draw[fill, color=red!90]
(9,1.7) -- (10,0) -- (8,0) -- cycle;

\draw[fill, color=red!40]
(9,1.7) -- (10,0) -- (9,3.4) -- cycle;

	\draw (8,0) node[below] {} -- (10,0)
node[below] {$6$} -- (11.7,-1)
node[right] {$3$} -- cycle;

	\draw (9,1.7) node[below] {} -- (10,0)
node[right] {} -- (9,3.4)
node[above] {$1$} -- cycle;

\draw (9,1.7) node[right] {$4$} -- (8,0)
node[left] {$5$} -- (6.3,-1)
node[left] {$2$} -- cycle;

\draw (6.3,-1) node[below] {} -- (11.7,-1)
node[right] {} -- (9,3.4)
node[left] {} -- cycle;

 \end{tikzpicture}
 \caption{A $2$-partition of the $3$-uniform complete hypergraph $K_6^{(3)}$ \label{fig1}}
\end{figure}

\end{example}
The following definitions and results are taken from \cite{ls2}. 
\begin{definition} 
If $\mathcal{H}=(V,E)$ is an $r$-uniform  hypergraph, we denote 
\[
 E_{s}\left(\mathcal{H}\right) = 
 \left\lbrace 
 \{a_1,a_2,\dots,a_s\}\subseteq V\middle|\;
\begin{aligned}
 & \{a_1,a_2,\dots,a_s,x_{s+1},x_{s+2},\dots,x_{r}\}\in E(\mathcal{H})\\
 & ~{\rm for ~ some ~}x_{s+1},x_{s+2},\dots,x_{r}\in V(\mathcal{H})
\end{aligned}
\right\rbrace.
\]
An element in $E_{s}(\mathcal{H})$ is called an $s$-face of the hypergraph  $\mathcal{H}$.
\end{definition}

\begin{definition}
(1) We say that a $d$-partition $(\mathcal{H}_1,\mathcal{H}_2,\dots, \mathcal{H}_d)$ of $K_{rd}^{(r)}$ is pre-homogeneous if  $$\vert E_{s}(\mathcal{H}_i)\vert={\displaystyle \binom{rd}{s}}$$ for all $1\leq i\leq d$  and all $0\leq s\leq r-1$.\\
(2) We say that a pre-homogeneous  $d$-partition $(\mathcal{H}_1,\mathcal{H}_2,\dots, \mathcal{H}_d)$ of $K_{rd}^{(r)}$ is homogeneous if  for all $1\leq i\leq d$ we have $$\vert E(\mathcal{H}_i)\vert=\vert E_r(\mathcal{H}_i)\vert={\displaystyle \binom{rd-1}{r-1}=\frac{1}{d}\binom{rd}{r}}.$$
\end{definition}

\begin{remark} \label{remark28} Notice that the $d$-partition $(\mathcal{H}_1,\mathcal{H}_2,\dots, \mathcal{H}_d)$ of $K_{rd}^{(r)}$ is pre-homogeneous if 
and only if $E_{r-1}(\mathcal{H}_i)=K_{rd}^{(r-1)}$ for all $1\leq i\leq d$. Indeed, $E_{r-1}(\mathcal{H}_i)\subseteq K_{rd}^{(r-1)}$, and if our partition is pre-homogeneous we have ${\displaystyle \vert E_{r-1}(\mathcal{H}_i)\vert= \binom{rd}{r-1}=\vert E(K_{rd}^{(r-1)})\vert}$. This implies that $E_{r-1}(\mathcal{H}_i)=K_{rd}^{(r-1)}$.  
\end{remark}

Next we discuss how to associate a simplicial complex  to an $r$-uniform hypergraph. Let $\mathcal{H}=(V,E)$ be an $r$-uniform sub-hypergraph of $K_{n}^{(r)}$ (where $V$ is ordered and $|V|=n$). Take $\{l_1,l_2,\dots,l_n\}$ to be a basis for $\mathbb{R}^n$. To $\mathcal{H}$ we associate an $(r-1)$-dimensional simplicial complex  $X(\mathcal{H})\subseteq \mathbb{R}^n$, where the $(s-1)$-skeleton of $X(\mathcal{H})$  is given by  
        \[
        X_{s-1}\left(\mathcal{H}\right) = 
        \left\lbrace 
        t_1l_{a_1}+t_2l_{a_2}+\dots + t_sl_{a_s}\in \mathbb{R}^n\middle|\;
        \begin{aligned}
        & t_1,\dots, t_s \in [0,1], \; t_1 +  \dots + t_s = 1,  &\\
				& \text{and }  (a_1, a_2, \dots ,a_s) \in E_s(\mathcal{H})&
        \end{aligned}
        \right\rbrace,
        \]
for each $0\leq s\leq r$.

\begin{definition} Let  $\mathcal{H}$ be an $r$-uniform sub-hypergraph of $K_{n}^{(r)}$. We define the Betti numbers of $\mathcal{H}$  as  the  reduced Betti numbers of its corresponding simplicial complex  $X(\mathcal{H})$. 
\end{definition} 

For the convenience of the reader, and in order to establish notation that will be used later, we give a few  details on how to compute Betti numbers of a hypergraph. 

Let $\mathcal{H}=(V,E)$ be an $r$-uniform  hypergraph.  Consider the chain complex $\mathcal{K}(\mathcal{H})$
$$0\to C_{r-1}(\mathcal{H})\stackrel{\partial_{r-1}}{\rightarrow} C_{r-2}(\mathcal{H})\stackrel{\partial_{r-2}}{\rightarrow}\dots \stackrel{\partial_{1}}{\rightarrow} C_{0}(\mathcal{H})\stackrel{\partial_{0}}{\rightarrow} C_{-1}(\mathcal{H})\to 0,$$
where for $1\leq k\leq r$ we have $C_{k-1}(\mathcal{H})$ is the $\mathbb{Q}$-vector space with a basis 
$$\{[a_1,\dots,a_k]~|~(a_1,a_2,\dots, a_k)\in E_k(\mathcal{H})\},$$ and $C_{-1}(\mathcal{H})$ is the $1$-dimensional $\mathbb{Q}$-vector space with the basis $\{[\emptyset]\}$. 

The map $\partial_{k-1} : C_{k-1}(\mathcal{H})\to C_{k-2}(\mathcal{H})$ is determined by the usual formula
$$ \partial_{k-1}([a_1,a_2,\dots,a_k])=[a_2,a_3,\dots,a_k]-[a_1,a_3,\dots,a_k]+\dots+(-1)^{k+1}[a_1,a_2,\dots,a_{k-1}].$$
%and $$\partial_0([a_1])=[\emptyset].$$
With this notation we have 
$$b_{k-1}(\mathcal{H})=b_{k-1}(X(\mathcal{H}))=dim_{\mathbb{Q}}(H_{k-1}(\mathcal{K}(\mathcal{H})))=dim_{\mathbb{Q}}\left(\frac{Ker(\partial_{k-1})}{Im(\partial_k)}\right),$$
for all $0\leq k\leq r$.

\begin{definition} Let $\mathcal{P}=(\mathcal{H}_1,\mathcal{H}_2,\dots, \mathcal{H}_d)$ be a homogeneous $d$-partition of $K_{rd}^{(r)}$. We say that $\mathcal{P}$ is acyclic if $b_{k-1}(\mathcal{H}_i)=0$ for each $1\leq i\leq d$ and all $0\leq k\leq r$. 
\end{definition}

\begin{remark} 
Recall from \cite{ka} that a $k$-dimensional simplicial complex $X$ over a set with $n$ elements is called $\mathbb{Q}$-acyclic  if $X$  has complete $(k-1)$-skeleton, the number of $k$-faces of $X$ is ${\displaystyle \binom{n-1}{k}}$, and $H_k(X)=0$. Also, notice that if $\mathcal{H}$ is an $r$-uniform hypergraph,  then to an $s$-face $\tau\in  E_{s}\left(\mathcal{H}\right)$ we associate an $(s-1)$-dimensional simplex $[\tau]\in X_{s-1}\left(\mathcal{H}\right)$. In particular, the number of $(s-1)$-dimensional simplexes of $X(\mathcal{H})$ is equal to $|E_{s}\left(\mathcal{H}\right)|$, and the simplicial complex $X(\mathcal{H})$ is of dimension $r-1$. 

From these remarks, one can see that a $d$-partition $(\mathcal{H}_1,\mathcal{H}_2,\dots, \mathcal{H}_d)$ of $K_{rd}^{(r)}$ is acyclic if and only if the simplicial complex $X(\mathcal{H}_i)$ is $\mathbb{Q}$-acyclic for each $1\leq i\leq d$. 
\end{remark}
\begin{remark}
At the time they wrote their paper, the authors of \cite{ls2} were not aware of \cite{ka}. This explains the unfortunate shift in the notation (i.e. a $s$-face of $\mathcal{H}$ corresponds to a $(s-1)$ simplex in $X(\mathcal{H})$). 
\end{remark}

\begin{remark} \label{rem211} Let  $(\mathcal{H}_1,\mathcal{H}_2,\dots, \mathcal{H}_d)$ be a homogeneous  $d$-partition  of $K_{rd}^{(r)}$. If $\mathcal{H}$ is one of the hypergraphs $\mathcal{H}_i$, then the Euler characteristic of the chain complex $\mathcal{K}(\mathcal{H})$ is given by 
$$\chi(\mathcal{K}(\mathcal{H}))=\binom{rd}{0}-\binom{rd}{1}+\binom{rd}{2}-\dots +(-1)^{r-1}\binom{rd}{r-1}+(-1)^{r}\frac{1}{d}\binom{rd}{r}=0.$$
In particular this implies that 
$$b_{-1}(\mathcal{H})-b_0(\mathcal{H})+b_1(\mathcal{H})-\dots+(-1)^{r-1}b_{r-2}(\mathcal{H})+(-1)^{r}b_{r-1}(\mathcal{H})=0.$$
Moreover, since the simplicial complex $X(\mathcal{H})$ has a complete $(r-2)$-dimensional skeleton, we have that $b_{-1}(\mathcal{H})=b_0(\mathcal{H})=b_1(\mathcal{H})=\dots=b_{r-3}(\mathcal{H})=0$, and so we get that $$b_{r-2}(\mathcal{H})=b_{r-1}(\mathcal{H}).$$ This implies that a homogeneous  $d$-partition $(\mathcal{H}_1,\mathcal{H}_2,\dots, \mathcal{H}_d)$ of $K_{rd}^{(r)}$ is acyclic if and only if $b_{r-1}(\mathcal{H}_i)=0$ for all $1\leq i\leq d$. %For more details and a similar setup one can check \cite{ka,ls2}. 
\end{remark}

\begin{remark} If $(\mathcal{H}_1,\mathcal{H}_2,\dots, \mathcal{H}_d)$  is a $d$-partition of  $K_{n}^{(r)}$ such that $X(\mathcal{H}_i)$ is  $\mathbb{Q}$-acyclic for each $1\leq i\leq d$ then $n=rd$. Indeed, the number of hyperedges in $K_n^{(r)}$ is ${\displaystyle \binom{n}{k}}$, while the number of hyperedges of the partition is ${\displaystyle d \binom{n-1}{r-1}}$. This implies that  
$$\frac{n!}{r!(n-r)!}=d\frac{(n-1)!}{(r-1)!(n-r)!},$$ which gives $n=rd$. When $r=2$ we recover the well-known fact that if the complete graph $K_n$ can be partitioned into $d$ spanning trees then $n=2d$. 
\end{remark}

\subsection{The map $det^{S^r}$}
The results in this subsection are needed for Corollary \ref{Comain}. 

The map $det^{S^r}$ is a byproduct of an exterior-algebra like construction $\Lambda_{V_d}^{S^r}$ from \cite{sta2,ls1}. It was conjectured in those papers that $dim_k(\Lambda_{V_d}^{S^r}[rd])=1$ for any $r$ and $d$. This conjecture is equivalent  with the existence and uniqueness of a nontrivial linear map 
$$det^{S^r}:V_d^{\otimes\binom{rd}{r}}\to k,$$ with the property that $det^{S^r}(\otimes_{1\leq i_1<i_2<\ldots<i_r\leq rd}(v_{i_1,\ldots,i_r}))=0$ if there exists $1\leq x_1<x_2<\ldots<x_{r+1}\leq rd$ such that \[v_{x_1,x_2,\ldots,x_r}=v_{x_1,\ldots,x_{r-1},x_{r+1}}=v_{x_1,\ldots,x_{r-2},x_r,x_{r+1}}=\ldots=v_{x_1,x_3,\ldots,x_r,x_{r+1}}=v_{x_2,x_3,\ldots,x_{r+1}}.\]
%This conjecture is equivalent with the fact that . 

A linear map $det^{S^r}$ that satisfies the above condition was constructed in \cite{dets2} for $r=2$, and in \cite{ls2} for $r\geq 3$. It was proved that $det^{S^2}$ and $det^{S^3}$ are nontrivial,  while the case $r\geq 4$ remained  an open question at the time.

Let  $\{e_1,\dots,e_d\}$ be a basis for $V_d$, and take $(\mathcal{H}_1,\dots, \mathcal{H}_d)$ a hyperedge $d$-partition of the complete graph $K_{rd}^{(r)}$.   We define $$\omega_{(\mathcal{H}_1,\dots, \mathcal{H}_d)}=\otimes_{1\leq i_1\leq \dots \leq i_r\leq rd}(e_{i_1,\dots ,i_r})\in V_d^{\otimes\binom{rd}{r}},$$ determined by $e_{i_1,\dots ,i_r}=e_t$ if and only if $(i_1,\dots ,i_r)\in \mathcal{H}_t$. 

The following theorem gives the connection between hyperedge $d$-partitions of the complete graph $K_{rd}^{(r)}$ and the map $det^{S^r}$. The case $r=2$ was first discussed in \cite{dets2}. 
\begin{theorem}[\cite{ls2}] \label{ThCombR} Let $(\mathcal{H}_1,\mathcal{H}_2,\dots, \mathcal{H}_d)$ be a $d$-partition of $K_{rd}^{(r)}$. The following are equivalent.
\begin{enumerate} 
\item $det^{S^r}(\omega_{(\mathcal{H}_1,\mathcal{H}_2,\dots, \mathcal{H}_d)})\neq 0$,
\item  $(\mathcal{H}_1,\mathcal{H}_2,\dots, \mathcal{H}_d)$ is acyclic.
\end{enumerate}
\end{theorem}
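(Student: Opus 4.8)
\textbf{Proof plan for Theorem \ref{ThCombR}.}

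The plan is to prove the two implications separately by working directly with the chain complex $\mathcal{K}(\mathcal{H}_i)$ and relating the top boundary map $\partial_{r-1}$ to the defining relations of $det^{S^r}$. The starting observation is that, because $(\mathcal{H}_1,\dots,\mathcal{H}_d)$ is a $d$-partition of $K_{rd}^{(r)}$, the top-dimensional chain space $\bigoplus_{i=1}^d C_{r-1}(\mathcal{H}_i)$ is naturally identified with $V_d^{\otimes\binom{rd}{r}}$ itself: an element $\omega=\otimes_{i_1<\dots<i_r}(e_{i_1,\dots,i_r})$ assigns to each $r$-subset $(i_1,\dots,i_r)$ the index $t$ of the block $\mathcal{H}_t$ containing it, which is exactly the datum of which $C_{r-1}(\mathcal{H}_t)$ carries the basis element $[i_1,\dots,i_r]$. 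The first step is therefore to make this dictionary precise and to observe that the defining vanishing condition for $det^{S^r}$ — that $\omega$ be killed whenever some $(r+1)$-subset $\{x_1,\dots,x_{r+1}\}$ has all of its $r$-faces lying in the same block — is precisely the statement that the corresponding $(r-1)$-chains in some single $\mathcal{H}_t$ contain $\partial_r([x_1,\dots,x_{r+1}])$ as a ``syzygy''; equivalently, the relations imposed on $det^{S^r}$ are indexed by the boundaries of $(r+1)$-subsets distributed within a single block.

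Next I would recall from \cite{ls1,ls2} (or re-derive) the combinatorial description of $det^{S^r}$: it is, up to scalar, the unique map whose kernel is spanned by all tensors $\omega$ satisfying the stated degeneracy condition together with the ``obvious'' linear relations, so that $det^{S^r}(\omega_{(\mathcal{H}_1,\dots,\mathcal{H}_d)})\neq 0$ if and only if the basis vector $\omega_{(\mathcal{H}_1,\dots,\mathcal{H}_d)}$ of $V_d^{\otimes\binom{rd}{r}}$ does not lie in the span of those degenerate tensors. The key step is then to show this span, intersected with the ``permutation'' basis vectors of the form $\omega_{(\mathcal{H}_1,\dots,\mathcal{H}_d)}$, is governed by the images of the boundary maps $\partial_r\colon C_r(K_{rd}^{(r+1)}) \to C_{r-1}$ restricted blockwise. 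Concretely, one shows: $\omega_{(\mathcal{H}_1,\dots,\mathcal{H}_d)}$ is in the ``degenerate span'' $\iff$ for some $i$ the cycle space of $\mathcal{H}_i$ in top degree is nontrivial, i.e. $Ker(\partial_{r-1}^{\mathcal{H}_i}) \neq Im(\partial_r)$ — but since each $X(\mathcal{H}_i)$ has complete $(r-2)$-skeleton and the correct face count (by homogeneity), $\partial_{r-1}^{\mathcal{H}_i}$ is surjective onto $C_{r-2}(\mathcal{H}_i)$ and $Im(\partial_r)=0$ in this setting, so the obstruction is exactly $Ker(\partial_{r-1}^{\mathcal{H}_i})\neq 0$, which is $b_{r-1}(\mathcal{H}_i)\neq 0$. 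By Remark \ref{rem211} this vanishes for all $i$ iff the partition is acyclic. Assembling these equivalences gives (1) $\iff$ (2).

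The main obstacle I expect is the precise identification in the middle step: translating the algebraic ``quotient by degenerate tensors'' definition of $det^{S^r}$ from \cite{ls1,ls2} into the homological language of $\bigoplus_i \mathcal{K}(\mathcal{H}_i)$, and in particular verifying that the degeneracy relations of $\Lambda_{V_d}^{S^r}[rd]$ correspond exactly (not merely contain, not merely are contained in) the blockwise top boundaries. This is where one must use the full strength of the construction of $det^{S^r}$ in \cite{ls2} — including the fact that no extra relations beyond the $(r+1)$-subset degeneracies and multilinearity survive in degree $rd$ — and it is the place where the argument is genuinely specific to the complete hypergraph $K_{rd}^{(r)}$ rather than an arbitrary simplicial complex. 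Once that dictionary is in hand, both implications follow formally: acyclicity of every $\mathcal{H}_i$ forces $\omega_{(\mathcal{H}_1,\dots,\mathcal{H}_d)}$ to survive to a nonzero element of the one-dimensional space $\Lambda_{V_d}^{S^r}[rd]$, while a nonzero top Betti number in some $\mathcal{H}_i$ exhibits $\omega_{(\mathcal{H}_1,\dots,\mathcal{H}_d)}$ explicitly as a combination of degenerate tensors, hence in the kernel of $det^{S^r}$.
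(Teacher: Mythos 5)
First, a point of comparison: the paper does not prove Theorem \ref{ThCombR} at all --- it is imported verbatim from \cite{ls2} and used as a black box (only in Corollary \ref{Comain}). So there is no internal proof to measure your plan against, and any proof must really engage with the explicit construction of $det^{S^r}$ in \cite{ls2}, not just with its defining vanishing property.

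That is where your plan has a genuine gap. Your central step asserts that $det^{S^r}(\omega_{(\mathcal{H}_1,\dots,\mathcal{H}_d)})\neq 0$ if and only if the basis tensor $\omega_{(\mathcal{H}_1,\dots,\mathcal{H}_d)}$ does not lie in the span of the degenerate tensors, on the grounds that $det^{S^r}$ is ``up to scalar, the unique map'' killing exactly that span. But uniqueness is equivalent to $\dim_k(\Lambda_{V_d}^{S^r}[rd])=1$, which is precisely the conjecture that remains open --- the present paper only establishes $\dim_k(\Lambda_{V_d}^{S^r}[rd])\geq 1$ in Corollary \ref{Comain}, and that corollary itself depends on Theorem \ref{ThCombR}. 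Without one-dimensionality, the specific functional $det^{S^r}$ constructed in \cite{ls2} could vanish on $\omega_{(\mathcal{H}_1,\dots,\mathcal{H}_d)}$ even when that tensor is not in the degenerate span, so your route to the implication $(2)\Rightarrow(1)$ is circular. Two further points would need repair even granting that step. The ``natural identification'' of $\bigoplus_i C_{r-1}(\mathcal{H}_i)$ with $V_d^{\otimes\binom{rd}{r}}$ is a dimension mismatch ($\binom{rd}{r}$ versus $d^{\binom{rd}{r}}$); what is true is only that $\omega_{(\mathcal{H}_1,\dots,\mathcal{H}_d)}$ is a single basis tensor recording the block of each hyperedge. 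And the claim that a nonzero element of $Ker(\partial_{r-1}^{\mathcal{H}_i})$ ``exhibits $\omega_{(\mathcal{H}_1,\dots,\mathcal{H}_d)}$ explicitly as a combination of degenerate tensors'' is asserted rather than shown: the degenerate tensors are indexed by $(r+1)$-subsets all of whose $r$-faces lie in one block, and a general top cycle of $\mathcal{H}_i$ need not be a combination of such boundaries. The argument in \cite{ls2} instead evaluates the explicitly constructed $det^{S^r}$ on $\omega_{(\mathcal{H}_1,\dots,\mathcal{H}_d)}$ and identifies the result with a determinant attached to the top boundary maps of the $\mathcal{H}_i$, whose nonvanishing is equivalent (in the spirit of \cite{ka}) to $\mathbb{Q}$-acyclicity; some such computation, rather than an appeal to uniqueness, is what your plan is missing.
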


The main result of this paper is proving the existence of an acyclic $d$-partition of the $r$-uniform complete hypergraph $K_{rd}^{(r)}$. In particular, this shows that the map $det^{S^r}$ is nontrivial for all $r$.

\section{A $d$-partition of the $r$-uniform complete hypergraph $K_{rd}^{(r)}$}

\label{section3}

\subsection{The partition $\mathcal{E}_d^{(r)}$}

In this subsection we construct $\mathcal{E}_d^{(r)}=(\Omega_1^{(r,d)}, \Omega_2^{(r,d)},\dots, \Omega_d^{(r,d)})$ a homogeneous hyperedge $d$-partition of the $r$-uniform complete hypergraph $K_{rd}^{(r)}$. This construction is inspired by results from the Appendix to  \cite{ls2} written in collaboration  with Anthony Passero.

Let $r>0$ and $d>0$ be two integers. For each $1\leq a\leq d$ we take 
$${\bf S}_a = \{ra - (r-1),ra-(r-2), \dots, ra\}.$$
One can see that the sets ${\bf S}_a$ for $1\leq a\leq d$ form a partition of the set $\{1,2,\dots,rd\}$. 

\begin{definition}
For each $1\leq a\leq d$ we define an $r$-uniform hypergraph $\Omega_a^{(r,d)}$  determined as follows 
$$V(\Omega_a^{(r,d)})=\{1,2,\dots,rd\},$$ 
and 
 $$E\left( \Omega_a^{(r,d)}\right) =\left\lbrace (i_1,\dots,i_r) \; \middle| \; 
 \begin{aligned}
  & 1 \leq i_1 <\dots < i_r \leq rd,\\
  & i_1 + \dots + i_r = t-1\; (mod \;r), \; \\
  & \text{and } i_t \in S_a
\end{aligned}
\right\rbrace.$$
\end{definition}
\begin{remark} Notice  that $\Omega_a^{(r,d)}$ is a sub-hypergraph of the  $r$-uniform complete hypergraph $K_{rd}^{(r)}$ for each $1\leq a\leq d$. Moreover, one can easily see that if $a\neq b$ then $$E(\Omega_a^{(r,d)})\cap E(\Omega_b^{(r,d)})=\emptyset. $$ Finally, one has that 
$$\bigcup_{a=1}^d E(\Omega_a^{(r,d)})=E(K_{rd}^{(r)}).$$ 
This means that $$\mathcal{E}_d^{(r)}=\left(\Omega_1^{(r,d)}, \Omega_2^{(r,d)},\dots, \Omega_d^{(r,d)}\right)$$ is a hyperedge $d$-partition of the  $r$-uniform complete hypergraph $K_{rd}^{(r)}$. 
\end{remark}
%\begin{example}
%For $r=2$ we recover the cycle-free $d$-partition $E_d=(\Omega_1^{(d)}, \Omega_2^{(d)},\dots, \Omega_d^{(d)})$ of of the complete graph $K_{2d}$ discussed in \cite{fls}. 
%\end{example}

\begin{lemma} For each $1\leq a<d$  the hypergraphs $\Omega_a^{(r,d)}$ and $\Omega_{a+1}^{(r,d)}$ are isomorphic. \label{lemma2}
\end{lemma}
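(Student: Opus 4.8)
The plan is to exhibit an explicit bijection between the vertex sets $\{1,2,\dots,rd\}$ which carries $E(\Omega_a^{(r,d)})$ onto $E(\Omega_{a+1}^{(r,d)})$. The natural candidate is the permutation $\pi$ that transposes the two blocks ${\bf S}_a$ and ${\bf S}_{a+1}$ in an order-preserving way while fixing every other vertex; concretely, $\pi(ra-(r-1)+j)=r(a+1)-(r-1)+j$ and $\pi(r(a+1)-(r-1)+j)=ra-(r-1)+j$ for $0\le j\le r-1$, and $\pi(i)=i$ otherwise. Since ${\bf S}_a$ and ${\bf S}_{a+1}$ are consecutive blocks of $r$ integers, we have $\pi(i)=i\pm r$ on these blocks and $\pi(i)=i$ elsewhere; in all cases $\pi(i)\equiv i\ (\mathrm{mod}\ r)$. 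This is the key arithmetic feature I would exploit.

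First I would record that $\pi$ induces a bijection on the $r$-element subsets of $\{1,2,\dots,rd\}$, hence a candidate hypergraph isomorphism $K_{rd}^{(r)}\to K_{rd}^{(r)}$; it suffices to check it maps $E(\Omega_a^{(r,d)})$ into $E(\Omega_{a+1}^{(r,d)})$, the reverse inclusion following by symmetry (applying the analogous transposition, which is $\pi$ itself). So take a hyperedge $(i_1,\dots,i_r)\in E(\Omega_a^{(r,d)})$, with $i_1+\dots+i_r\equiv t-1\ (\mathrm{mod}\ r)$ and $i_t\in{\bf S}_a$. Let $(i'_1<\dots<i'_r)$ be the sorted image $\{\pi(i_1),\dots,\pi(i_r)\}$. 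Because $\pi(x)\equiv x\ (\mathrm{mod}\ r)$ for every $x$, the sum is unchanged mod $r$: $i'_1+\dots+i'_r\equiv i_1+\dots+i_r\equiv t-1\ (\mathrm{mod}\ r)$. The second step is to check that the entry in position $t$ of the sorted image lands in ${\bf S}_{a+1}$: since $i_t\in{\bf S}_a$ we have $\pi(i_t)\in{\bf S}_{a+1}$, so I must verify that $\pi(i_t)$ occupies slot $t$ after re-sorting, i.e. that $\pi$ does not disturb the rank of $i_t$ among $\{i_1,\dots,i_r\}$. The ranks can only change through the $i_j$ lying in ${\bf S}_a\cup{\bf S}_{a+1}$; I would argue that among those, $\pi$ simply interchanges the sub-list in ${\bf S}_a$ with the sub-list in ${\bf S}_{a+1}$, and since ${\bf S}_a$ entirely precedes ${\bf S}_{a+1}$, the combined relative order of the $\le r$ affected indices is preserved as a sequence — hence every index keeps its rank, and in particular $\pi(i_t)$ stays in position $t$. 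Therefore $(i'_1,\dots,i'_r)\in E(\Omega_{a+1}^{(r,d)})$, giving $\pi\bigl(E(\Omega_a^{(r,d)})\bigr)\subseteq E(\Omega_{a+1}^{(r,d)})$, and by symmetry equality, so $\Omega_a^{(r,d)}\cong\Omega_{a+1}^{(r,d)}$.

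The main obstacle is the rank-preservation claim in the last step: it is intuitively clear that swapping two order-separated blocks preserves the rank of each element, but one has to be careful that an index $i_j$ could lie in ${\bf S}_a$ while another lies in ${\bf S}_{a+1}$, and that $t$ itself could point to such an index. I would handle this by splitting $\{i_1,\dots,i_r\}$ into the part in ${\bf S}_a$, the part in ${\bf S}_{a+1}$, and the part outside both, observing that $\pi$ fixes the third part pointwise and that ${\bf S}_a<{\bf S}_{a+1}$ forces the $\pi$-images of the first two parts to interleave with the third part in exactly the same pattern (because $\pi$ is monotone on ${\bf S}_a$, monotone on ${\bf S}_{a+1}$, and shifts by the fixed amount $\pm r$, with no block of the untouched vertices lying strictly between ${\bf S}_a$ and ${\bf S}_{a+1}$). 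This makes the permutation of sorted positions trivial, and the result follows. An alternative, if one prefers to avoid the case analysis entirely, is to induct on $d$ by reindexing so that ${\bf S}_a,{\bf S}_{a+1}$ become the last two blocks, but the direct transposition argument above is cleaner.
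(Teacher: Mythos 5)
Your map $\pi$ (the order-preserving swap of the blocks ${\bf S}_a$ and ${\bf S}_{a+1}$) does not work, and the rank-preservation claim you flag as the main obstacle is exactly where the argument breaks. If a hyperedge meets \emph{both} blocks, say in $u_1<\dots<u_\alpha$ ($\in{\bf S}_a$) and $v_1<\dots<v_\beta$ ($\in{\bf S}_{a+1}$), then these occur consecutively in the sorted tuple as $u_1,\dots,u_\alpha,v_1,\dots,v_\beta$, but their images sort as $\pi(v_1)<\dots<\pi(v_\beta)<\pi(u_1)<\dots<\pi(u_\alpha)$: interchanging two order-separated blocks cyclically rotates the combined sub-list rather than fixing it, so every $u_i$ gains $\beta$ in rank and every $v_i$ loses $\alpha$. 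Since $\pi$ preserves residues mod $r$, the sum --- hence the required index $t$ --- is unchanged, while the marked element has moved out of position $t$; the two conditions defining $E(\Omega_{a+1}^{(r,d)})$ are therefore no longer matched. Concretely, for $r=3$, $d=2$, $a=1$: the hyperedge $(1,5,6)\in E(\Omega_1^{(3,2)})$ (sum $\equiv 0$, $t=1$, $i_1=1\in{\bf S}_1$) is sent by $\pi$ to $\{4,2,3\}$, i.e.\ to $(2,3,4)$, which has sum $\equiv 0$, $t=1$, $i_1=2\in{\bf S}_1$, so $(2,3,4)$ lies in $E(\Omega_1^{(3,2)})$ again, not in $E(\Omega_2^{(3,2)})$. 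So the forward inclusion you want is simply false for this $\pi$.

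The paper's proof uses a different bijection precisely to compensate for this rank shift: its $\phi$ sends ${\bf S}_{a+1}\to{\bf S}_a$ preserving residues mod $r$ but sends ${\bf S}_a\to{\bf S}_{a+1}$ with the residue shifted by $-1$ (so $\phi$ is \emph{not} order-preserving between the blocks). For a hyperedge of $\Omega_{a+1}^{(r,d)}$ with $\alpha_a$ entries in ${\bf S}_a$, this changes the total sum by $-\alpha_a$ mod $r$, which exactly matches the drop by $\alpha_a$ in the rank of the distinguished entry after re-sorting; both sides of the defining condition move from $t$ to $t-\alpha_a$ in step. If you want to salvage your write-up, you must either adopt such a residue-twisted swap or otherwise build the compensation between the sum condition and the position condition into your map; no purely order-preserving relabelling of the two blocks can work, as the counterexample shows.
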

\begin{proof}  Fix $1\leq a < d$. It is enough to construct a bijective map $\phi:\{1,2,\dots,rd\}\to \{1,2,\dots,rd\},$ such that the induced map on $E(K_{rd}^{(r)})$ satisfies  
$$\phi(E(\Omega_{a+1}^{(r,d)}))\subseteq E(\Omega_{a}^{(r,d)}),\text{ and }\phi^{-1}(E(\Omega_{a}^{(r,d)}))\subseteq E(\Omega_{a+1}^{(r,d)}).$$

As noticed above 
$$\{1,2,\dots,rd\}=\bigcup_{k=1}^d {\bf S}_k,$$ is a partition of the set $\{1,2,\dots,rd\}$, and so it is enough to define the map $\phi$ on the sets ${\bf S}_k$ for all $1\leq k\leq d$. We take  $$\phi:\{1,2,\dots,rd\}\to \{1,2,\dots,rd\}$$ determined by 
\begin{itemize}
\item if $z\notin {\bf S}_a$ and $z\notin {\bf S}_{a+1}$ then $\phi(z)=z$,
\item if $z\in {\bf S}_{a+1}$ then $\phi(z)\in {\bf S}_a$ and $\phi(z)=z\; (mod\; r)$, 
\item if $z\in {\bf S}_{a}$ then $\phi(z)\in {\bf S}_{a+1}$ and $\phi(z)=z-1\; (mod\; r)$. 
\end{itemize}
One can check that the above conditions determine a well defined bijective map. 

Take $1\leq i_1<i_2<\dots<i_r\leq rd$ such that $(i_1,i_2,\dots,i_r)\in E(\Omega_{a+1}^{(r,d)})$. This means that there exists $1\leq t\leq r$ with  $$\sum_{l=1}^ri_l=(t-1)\;(mod\; r),\text{ and }i_t\in {\bf S}_{a+1}.$$

Let 
$$\phi((i_1,i_2,\dots,i_r))=(j_1,j_2,\dots,j_r),$$
where $1\leq j_1<j_2<\dots<j_r\leq rd$, and $\{\phi(i_1),\phi(i_2),\dots,\phi(i_r)\}=\{j_1,j_2,\dots,j_r\}$ as sets. 

For each $1\leq k\leq d$ we denote 
$$\alpha_k=|\{i_1,i_2,\dots,i_r\}\bigcap {\bf S}_k|,$$
$$\beta_k=|\{j_1,j_2,\dots,j_r\}\bigcap {\bf S}_k|.$$
One can show that %$\beta_{a}=\alpha_{a+1}$, $\beta_{a+1}=\alpha_a$, and $\beta_k=\alpha_k$ if $a\neq k\neq a+1$. 
 \begin{eqnarray}\label{eqj0}
\beta_k=\left\{
 \begin{aligned}
  & \alpha_{a+1} \text{ if } k=a,\\
  &  \alpha_a \text{ if } k=a+1,\\
  & \alpha_k \text{ otherwise}.
\end{aligned}
\right. 
\end{eqnarray}
Indeed, from the definition of $\phi$ we have 
\begin{eqnarray*}
\beta_{a}&=&|\{j_1,j_2,\dots,j_r\}\bigcap {\bf S}_a|\\
&=&|\{\phi(i_1),\phi(i_2),\dots,\phi(i_r)\}\bigcap {\bf S}_{a}|\\
&=&|\{i_1,i_2,\dots,i_r\}\bigcap {\bf S}_{a+1}|\\
&=&\alpha_{a+1}.
\end{eqnarray*}
The other identities can be proved in a similar fashion. 

Since $i_t\in {\bf S}_{a+1}$ we must have 
$$\alpha_1+\dots+\alpha_{a-1}+\alpha_a<t\leq \alpha_1+\dots+\alpha_{a-1}+\alpha_a+\alpha_{a+1}.$$ 
Subtracting $\alpha_a$ and using  Equation (\ref{eqj0}) we get
\begin{eqnarray*}
\sum_{k=1}^{a-1}\beta_k=\sum_{k=1}^{a-1}\alpha_k<t-\alpha_a\leq \left(\sum_{k=1}^{a-1}\alpha_k\right)+\alpha_{a+1}= \sum_{k=1}^{a}\beta_k,
\end{eqnarray*}
which means that  
\begin{eqnarray}j_{(t-\alpha_a)}\in {\bf S}_{a}. \label{eqj1}
\end{eqnarray} 
Next, since $\phi(z)=z-1\; (mod\; r)$ for each  $z\in {\bf S}_{a}$, $\phi(z)=z\; (mod\; r)$ if $z\notin {\bf S}_{a}$, and $\alpha_a=|\{i_1,i_2,\dots,i_r\}\bigcap {\bf S}_a|$, we have 
\begin{eqnarray}  \label{eqj2}
\begin{split}
j_1+j_2+\dots+j_r\; (mod\;r)&= \phi(i_1)+\phi(i_2)+\dots+\phi(i_r)\; (mod\;r)\\
&= (i_1+i_2+\dots+i_r)-\alpha_a\; (mod\;r)\\
&= (t-\alpha_a)-1\; (mod\;r).
 \end{split}
\end{eqnarray}
From Equations (\ref{eqj1}) and (\ref{eqj2})  we get that  $$\phi((i_1,i_2,\dots,i_r))=(j_1,j_2,\dots,j_r)\in E(\Omega_{a}^{(r,d)}),$$ which means that $\phi(E(\Omega_{a+1}^{(r,d)}))\subseteq E(\Omega_{a}^{(r,d)})$. 

A similar argument shows that $\phi^{-1}(E(\Omega_{a}^{(r,d)}))\subseteq E(\Omega_{a+1}^{(r,d)})$. 

\end{proof}
\begin{lemma} The $d$-partition $\mathcal{E}_d^{(r)}$ is homogeneous. \label{lemma3}
\end{lemma}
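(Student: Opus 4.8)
The goal is to show that $\mathcal{E}_d^{(r)}$ is homogeneous, which by definition means two things: first, that the partition is pre-homogeneous, i.e. $|E_s(\Omega_a^{(r,d)})| = \binom{rd}{s}$ for all $1 \le a \le d$ and $0 \le s \le r-1$; and second, that $|E(\Omega_a^{(r,d)})| = \binom{rd-1}{r-1} = \frac{1}{d}\binom{rd}{r}$. By Lemma \ref{lemma2} all the $\Omega_a^{(r,d)}$ are pairwise isomorphic, so it suffices to verify both conditions for a single value of $a$, or alternatively to observe that isomorphic hypergraphs have the same face-counts. The second condition is then almost immediate: since the $E(\Omega_a^{(r,d)})$ partition $E(K_{rd}^{(r)})$ into $d$ pieces all of the same size (by the isomorphism), each piece has size $\frac{1}{d}\binom{rd}{r} = \binom{rd-1}{r-1}$.

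\textbf{The pre-homogeneity step.} By Remark \ref{remark28}, pre-homogeneity is equivalent to $E_{r-1}(\Omega_a^{(r,d)}) = K_{rd}^{(r-1)}$ for every $a$, i.e. every $(r-1)$-subset of $\{1,\dots,rd\}$ is an $(r-1)$-face of $\Omega_a^{(r,d)}$. So fix $a$ and fix an $(r-1)$-set $\{b_1 < b_2 < \dots < b_{r-1}\}$; I must find some $x \in \{1,\dots,rd\} \setminus \{b_1,\dots,b_{r-1}\}$ such that the $r$-set obtained by inserting $x$ lies in $E(\Omega_a^{(r,d)})$. Write the sorted $r$-tuple as $(i_1,\dots,i_r)$ and let $t$ be the position of $x$ among them. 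The membership condition requires $i_1 + \dots + i_r \equiv t-1 \pmod r$ \emph{and} $i_t = x \in \mathbf{S}_a$. The natural strategy: let $x$ range over $\mathbf{S}_a = \{ra-(r-1),\dots,ra\}$, which has exactly $r$ consecutive elements. As $x$ ranges over these $r$ values, the sum $b_1 + \dots + b_{r-1} + x$ takes $r$ consecutive values, hence hits every residue class mod $r$ exactly once; and simultaneously the insertion position $t$ of $x$ among the sorted entries is essentially constant (or changes in a controlled way) because all candidate values of $x$ lie in the single block $\mathbf{S}_a$ and the $b_j$'s outside that block don't affect the relative order. The main care needed is the interaction: when $x$ moves within $\mathbf{S}_a$, how does $t$ change relative to how the required residue $t-1$ changes? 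I expect that if none of the $b_j$ lies in $\mathbf{S}_a$, then $t$ is constant across all $r$ choices of $x$, so exactly one choice of $x$ gives the sum $\equiv t-1$; and if some $b_j$'s do lie in $\mathbf{S}_a$, a slightly more careful bookkeeping (tracking how $t$ increments as $x$ crosses each such $b_j$, together with how the residue target shifts) still yields exactly one valid insertion. This bookkeeping is the one place demanding genuine attention.

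\textbf{Expected main obstacle.} The crux is the case analysis when $\mathbf{S}_a \cap \{b_1,\dots,b_{r-1}\} \neq \emptyset$: here both the insertion index $t$ and the sum vary as $x$ runs through the available slots of $\mathbf{S}_a$, and one must check these variations are "compatible" so that precisely one $x$ works — equivalently, that the map $x \mapsto (i_1+\dots+i_r) - (t-1) \pmod r$ is a bijection from the admissible $x$-values to $\mathbb{Z}/r\mathbb{Z}$. A clean way to organize this is to note that each element of $\mathbf{S}_a$ itself is $\equiv$ to a distinct residue mod $r$ (indeed $\mathbf{S}_a$ is a complete residue system mod $r$), so one can index the ``slot'' $x$ directly by its residue and reduce the problem to an explicit linear count. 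Once pre-homogeneity and the $\frac 1 d \binom{rd}{r}$ edge-count are both in hand, the lemma follows directly from the definition of homogeneous. I would present the argument for a fixed $a$, invoke Lemma \ref{lemma2} to transfer to all $a$, and keep the residue-counting argument as the technical core.
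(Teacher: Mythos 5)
There is a genuine gap, and it stems from a misreading of the membership condition for $E(\Omega_a^{(r,d)})$. The definition asks that $i_t\in {\bf S}_a$, where $t$ is the index determined by $i_1+\dots+i_r\equiv t-1 \pmod r$; it does \emph{not} ask that the newly inserted vertex $x$ be the one occupying position $t$. Your plan imposes the stronger condition ``$i_t=x$'', i.e.\ that the insertion position of $x$ coincide with the residue-determined index $t$, and that stronger condition can be unsatisfiable. Concretely, take $r=3$, $d=2$, $a=1$ and the $2$-face $\{2,4\}$. Its completions to hyperedges of $\Omega_1^{(3,2)}$ are $(1,2,4)$, $(2,3,4)$ and $(2,4,6)$ (see Example \ref{exampleG23}), and in all three the distinguished position $t$ is occupied by the vertex $2$ --- an element of the face that happens to lie in ${\bf S}_1$ --- never by the inserted vertex. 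Under your bookkeeping, which amounts to solving $x+b_1+\dots+b_{r-1}\equiv t(x)-1 \pmod r$ with $t(x)$ the position of $x$, neither admissible $x\in\{1,3\}$ works for this face, so the claim that ``precisely one $x$ works'' fails exactly in the case you yourself flag as the crux, namely ${\bf S}_a\cap\{b_1,\dots,b_{r-1}\}\neq\emptyset$.

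The repair is the paper's argument, and it is worth seeing why it sidesteps the problem. Work with $a=1$, so that ${\bf S}_1=\{1,\dots,r\}$ is an initial segment, and suppose $k-1$ of the $b_j$ already lie in ${\bf S}_1$. For each target index $t\in\{1,\dots,k\}$ there is a unique $x_t\in{\bf S}_1$ with $x_t+b_1+\dots+b_{r-1}\equiv t-1\pmod r$; these $x_t$ are pairwise distinct, so by pigeonhole some $x_{t_0}$ avoids the $k-1$ values of ${\bf S}_1$ already used by the face. For that choice the first $k$ entries of the sorted $r$-tuple all lie in ${\bf S}_1$, hence $i_{t_0}\in{\bf S}_1$ holds automatically whether or not $i_{t_0}=x_{t_0}$. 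In other words, one should choose the residue target $t$ first and only then the vertex $x_t$, rather than tying $t$ to the position of $x$. The remainder of your outline --- reducing to a single $a$ via Lemma \ref{lemma2}, using Remark \ref{remark28} for pre-homogeneity, and getting $\vert E(\Omega_i^{(r,d)})\vert=\frac1d\binom{rd}{r}$ from the fact that isomorphic pieces partition $E(K_{rd}^{(r)})$ evenly --- matches the paper and is correct.
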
 

\begin{proof}
First let's see that $E_{r-1}(\Omega_{1}^{(r,d)})=E(K_{rd}^{(r-1)})$. Take $(i_2,\dots,i_r)\in E(K_{rd}^{(r-1)})$, with $1\leq i_2<i_3<\dots <i_r\leq rd$. We will show that there exists $x\in {\bf S}_1$ such that $i_2,i_3, \dots ,i_r$ together with $x$ form a hyperedge  in $\Omega_{1}^{(r,d)}$. 

Let $k$ be the unique integer such that  $1\leq k\leq r$, $i_2,i_3,\dots,i_k\in {\bf S}_1$, and $i_{k+1},\dots,i_r\notin {\bf S}_1$. In order for $(i_2,i_3,\dots,i_r)$ to be in $E_{r-1}(\Omega_{1}^{(r,d)})$ it is enough to find $1\leq t\leq k$, and $x_t\in {\bf S}_1\setminus \{i_2, \dots, i_k\}$ such that   
\begin{eqnarray}
x_t+i_2+i_3+\dots+i_r&=&t-1\; (mod \;r). \label{eqforx}
\end{eqnarray}
Notice that Equation (\ref{eqforx}) depends on the parameter $t\in\{1,2,\dots,k\}$, and for each $t$ we have a unique solution $x_t\in {\bf S}_1=\{1,2,\dots, r\}$ determined by the condition 
$$x_t=t-1-i_2-i_3-\dots-i_r\; (mod \;r).$$   
Moreover,  one can see that $x_t\neq x_s$ for each $1\leq s\neq t\leq k$. This means that the set $\{x_1,x_2,\dots,x_{k}\}\subseteq {\bf S}_1$ has exactly $k$ elements. On the other hand the set $\{i_2,\dots,i_k\}\subseteq {\bf S}_1$ has exactly $k-1$ elements, and so there exist $1\leq t_0\leq k$ such that $x_{t_0}\notin \{i_2,\dots,i_k\}$.   This shows that  $$E_{r-1}(\Omega_{1}^{(r,d)})=E(K_{rd}^{(r-1)}).$$ 
Since each $\Omega_{i}^{(r,d)}$ is isomorphic to $\Omega_{1}^{(r,d)}$, we have that $E_{r-1}(\Omega_{i}^{(r,d)})=K_{rd}^{(r-1)}$, and so the $d$-partition $\mathcal{E}_d^{(r)}$ is pre-homogeneous. 

Finally, since  $\Omega_{i}^{(r,d)}$ is isomorphic to $\Omega_{1}^{(r,d)}$, the $d$-partition $\mathcal{E}_d^{(r)}$ divides evenly the  ${\displaystyle \binom{rd}{r}}$ hyperedges of $K_{rd}^{(r)}$  among the $d$ sub-hypergraphs $\Omega_{i}^{(r,d)}$. Hence,  
$$\vert E(\Omega_{i}^{(r,d)})\vert=\frac{1}{d}\vert E(K_{rd}^{(r)})\vert=\frac{1}{d}\binom{rd}{r},$$
which shows that $\mathcal{E}_d^{(r)}$ is homogeneous. 
\end{proof}

\subsection{The hypergraph $\Omega_{1}^{(r,d)}$} Next, we focus on the first component, the hypergraph $\Omega_{1}^{(r,d)}$. 

\begin{definition} Let $1\leq k\leq r$ and take $r+1\leq j_{k+1}<j_{k+2}<\dots <j_r\leq rd$. We define the $r$-uniform hypergraph $\Gamma_{k,r}(j_{k+1},\dots,j_{r})$ as the sub-hypergraph of $\Omega_{1}^{(r,d)}$  determined by 
$$V(\Gamma_{k,r}(j_{k+1},\dots,j_{r}))=\{1,2,\dots,rd\},$$

$$E(\Gamma_{k,r}\left(j_{k+1},\dots,j_{r})\right) =\left\lbrace (i_1,i_2,\dots,i_k,j_{k+1},\dots,j_r) \; \middle| \; 
 \begin{aligned}
  & 1\leq i_1<i_2<\dots <i_k\leq r,~~ \text{and}\\
  & (i_1,i_2,\dots,i_k,j_{k+1},\dots,j_r)\in E(\Omega_{1}^{(r,d)})
\end{aligned}
\right\rbrace.$$
\end{definition}
\begin{example} If $r=4$ and $d=3$, we have that  $E(\Gamma_{1,4}(5,7,12))=\{(4,5,7,12)\}$, and $E(\Gamma_{2,4}(5,7))=\{(1,3,5,7),(1,4,5,7),(2,3,5,7)\}$.  
\end{example}

\begin{remark}
Notice  that $\Omega_{1}^{(r,d)}$ is the disjoint union of the hypergraphs $\Gamma_{k,r}(j_{k+1},\dots,j_{r})$. More precisely, we have 
\begin{eqnarray*}
E(\Omega_{1}^{(r,d)})=\bigcup_{1\leq k\leq r}\left(\bigcup_{r+1\leq j_{k+1}<\dots<j_r\leq rd}E(\Gamma_{k,r}(j_{k+1},\dots,j_{r}))\right).
\end{eqnarray*}
\label{rem3}
Indeed, if $\sigma\in E(\Omega_{1}^{(r,d)})$ then there exist unique $1\leq k\leq r$ such that $\sigma=(i_1,\dots,i_k,j_{k+1},\dots,j_r)$, $1\leq i_1<\dots<i_k\leq r$, and $r+1\leq j_{k+1}<\dots<j_r\leq rd$, which means that $\sigma\in E(\Gamma_{k,r}(j_{k+1},\dots,j_{r}))$. 
Moreover, $(j_{k+1},\dots,j_{r})$ is unique with the above property.
\end{remark}

\begin{example}\label{exampleG23}
When $r=3$ and $d=2$ we have 
\begin{eqnarray*}
E(\Omega_{1}^{(3,2)})= \left(\bigcup_{4\leq j_2<j_3\leq 6}E(\Gamma_{1,3}(j_2,j_3))\right) \bigcup \left(\bigcup_{4\leq j_3\leq 6}E(\Gamma_{2,3}(j_3)) \right)  \bigcup E(\Gamma_{3,3}(\emptyset)).
\end{eqnarray*}
where  $$E(\Gamma_{1,3}(4,5))=\{(3,4,5)\}, ~~~ E(\Gamma_{1,3}(4,6))=\{(2,4,6)\}, ~~~E(\Gamma_{1,3}(5,6))=\{(1,5,6)\},$$ 
$$E(\Gamma_{2,3}(4))=\{(1,2,4),(2,3,4)\},~~~E(\Gamma_{2,3}(5))=\{(1,3,5),(2,3,5)\},~~~E(\Gamma_{2,3}(6))=\{(1,2,6),(1,3,6)\},$$ and 
$$E(\Gamma_{3,3}(\emptyset))=\{(1,2,3)\}.$$  
See  also the left side of Figure \ref{fig1}, where the hypergraphs $\Gamma_{1,3}(j_2,j_3)$ are in light blue, $\Gamma_{2,3}(j_3)$ in mid blue, and $\Gamma_{3,3}(\emptyset)$ in darker blue. 
\end{example}

\begin{definition} Let $1\leq k\leq r$ and $0\leq a\leq r-1$. We define the $k$-uniform hypergraph $\Gamma_{k,r}^a$  determined by 
$$V(\Gamma_{k,r}^a)=\{1,2,\dots,r\},$$ 

$$E(\Gamma_{k,r}^a)=\left\lbrace (i_1,i_2,\dots,i_k) \; \middle| \; 
 \begin{aligned}
  & 1\leq i_1<i_2<\dots i_k\leq r,~~ \text{and}\\
  & i_1+i_2+\dots+i_k=a+t ~(mod~r) ~\text{for some} ~0\leq t\leq k-1
\end{aligned}
\right\rbrace.$$
\end{definition}

\begin{example}
If $r=4$ and $d=3$, we have $E(\Gamma_{1,4}^{0})=\{(4)\}$, $E(\Gamma_{2,4}^{0})=\{(1,3),(1,4),(2,3)\}$, and  $E(\Gamma_{2,4}^{1})=\{(1,4),(2,4),(2,3)\}$.
\end{example}

\begin{remark}
One should notice that $X(\Gamma_{k,r}^a)$ is a particular case of the sum complex $X_A$ considered in \cite{lmr}. 
\end{remark}

\begin{remark} Let $1\leq k\leq r$. Consider $r+1\leq j_{k+1}<j_{k+2}<\dots <j_r\leq rd$, and take $a=j_{k+1}+\dots+j_r~(mod~r)$. 
Then the map $$\psi:E(\Gamma_{k,r}^{-a})\to E(\Gamma_{k,r}(j_{k+1},\dots,j_{r})),$$ determined by 
$$\psi(i_1,\dots,i_k)=(i_1,\dots,i_k,j_{k+1}\dots,j_r)$$ gives a bijection between the set of $k$-hyperedges of $\Gamma_{k,r}^{-a}$, and the set of $r$-hyperedges of $\Gamma_{k,r}(j_{k+1},\dots,j_{r})$. 
Indeed, $\psi$ is well defined because if $(i_1,\dots,i_k)\in E(\Gamma_{k,r}^{-a})$ then 
$$i_1+\dots+i_k=-a+t,$$ for some $0\leq t\leq k-1$. Since $j_{k+1}+\dots+j_r=a ~(mod~r)$ we have
$$i_1+\dots+i_k+j_{k+1}+\dots+j_r=t,$$
where $0\leq t\leq k-1$, which means that $(i_1,\dots,i_k,j_{k+1}\dots,j_r)\in E(\Gamma_{k,r}(j_{k+1},\dots,j_{r}))$. One can easily see that $\psi$ is bijective. However, $\psi$ is not an isomorphism of hypergraphs when $k\neq r$. \label{lemmagamma}
\end{remark}

\begin{remark} One can easily see that if $a=b~(mod~r)$, then $\Gamma_{k,r}^a=\Gamma_{k,r}^b$. Moreover, one can show that 
the map $\phi:\{1,\dots,r\}\to \{1,\dots,r\}$ given by  $\phi(i)=i+1\; (mod\;r)$ induces  an isomorphism $\Gamma_{k,r}^a\simeq \Gamma_{k,r}^{a+k}$. In general, $\Gamma_{k,r}^a$ and $\Gamma_{k,r}^b$ need not be isomorphic. For example,  $\Gamma_{3,6}^0$ is not isomorphic to $\Gamma_{3,6}^2$. Finally, one can prove  that $E_{k-1}(\Gamma_{k,r}^a)=K_{r}^{(k-1)}$,  and $\vert E(\Gamma_{k,r}^a)\vert={\displaystyle \binom{r-1}{k-1}}$. 
Since we do not need any of these results, we leave their proofs to the reader. 
\end{remark}

\section{Leaf-Equivalence} 
\label{section4}
In this section we introduce the leaf-equivalence relation among $k$-uniform hypergraphs, and show how it can be used to compute Betti numbers. 

\begin{definition} Let $\Gamma$ be a $k$-uniform hypergraph, and $\sigma=(i_1,\dots,i_k)\in E(\Gamma)$ such that there exits $\sigma_{\widehat{s}}=(i_1,\dots,\widehat{i_s},\dots,i_k)\in E_{k-1}(\Gamma)$, a $(k-1)$-face of $\sigma$ that is not a $(k-1)$-face for any other hyperedge of $\Gamma\setminus\{\sigma\}$. Then we say that $\Gamma$ and $\Gamma \setminus\{\sigma\}$ are leaf-equivalent via the $(k-1)$-face $\sigma_{\widehat{s}}$. 
\end{definition} 

%\begin{example}
 %Notice that if $\Gamma$ is a $2$-uniform hypergraph (i.e. $\Gamma$ is a graph), and $\sigma$ is an hyperedge of $\Gamma$  (i.e. $\sigma=(a,b)$ is an edge of the graph $\Gamma$), such that one of the $(2-1)$-faces of $\sigma$ does not belong to any other hyperedge of $\Gamma$, then $\sigma$ is a leaf of the graph $\Gamma$. That is because one of its two vertices $a$ or $b$ does not belong to any other edge of $\Gamma$. 
%\end{example}

\begin{definition}
Let $\Gamma=(E,V)$ be a $k$-uniform hypergraph. We say that $\Gamma$ is leaf-equivalent to the empty $k$-uniform hypergraph if there exist a sequence of $k$-uniform hypergraphs on the set $V$
$$\emptyset = \Gamma_0\subset \Gamma_1\subset \dots \subset \Gamma_n = \Gamma,$$
such that for each $1\leq i\leq n$, there exists a hyperedge $\sigma_i\in E(\Gamma_i)$ for which
$\Gamma_i$ and $\Gamma_{i-1}=\Gamma_i \setminus\{\sigma_i\}$ are leaf-equivalent via some $(k-1)$-face of $\sigma_i$.
\end{definition}

\begin{example} If $G$ is a graph then $G$ is  leaf-equivalent to the empty graph if and only if $G$ is a disjoint union of trees (i.e. $G$ is a forest). The $3$-uniform hypergraph on $4$ vertices $K_4^{(3)}$ is not leaf-equivalent to the empty $3$-uniform hypergraph on $4$ vertices since every $(3-1)$-face of $K_4^{(3)}$ is shared by two hyperedges. 

In Figure \ref{fig2} we present a step-by-step leaf-equivalence between a $3$-uniform hypergraph with $6$ vertices and $3$ hyperedges, and the $3$-uniform empty hypergraph with $6$ vertices. At each step, the $3-1$-face that is used for the leaf-equivalence is marked with $\|$. 

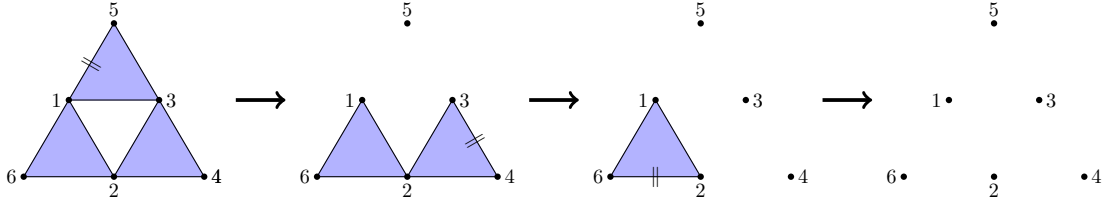
\begin{figure}[h!]
 \centering
 \begin{tikzpicture}[scale=0.6,every node/.style={scale=0.7}]
 %%%%%%%%%%%%%%%%%%%
 % 
%%%%%%%%%%%%%%%
%%%%%%%%%%%%%%%%
%%%%BLUE
%%%%%%%%%%%%
 \draw[fill, color=blue!30]
(1,1.7) -- (2,0) -- (0,0) -- cycle;
\draw[fill, color=blue!30]
(4,0) -- (2,0) -- (3,1.7) -- cycle;
%\draw[fill, color=blue!60]
%(1,1.7) -- (2,0) -- (3,1.7) -- cycle;
\draw[fill, color=blue!30]
(1,1.7) -- (2,3.4) -- (3,1.7) -- cycle;

\draw (1,1.7) node[left] {} -- (2,0)
node[below] {} -- (0,0)
node[left] {} -- cycle;

%\draw (1,1.7) node[below] {} -- (2,0)
%node[below] {} -- (3,1.7)
%node[right] {} -- cycle;

\draw (1,1.7) node[below] {} -- (2,3.4)
node[above] {} -- (3,1.7)
node[right] {} -- cycle;

\draw (4,0) node[right] {4} -- (2,0)
node[below] {} -- (3,1.7)
node[right] {} -- cycle;

\draw (1.5,2.5) node[sloped] {\rotatebox[origin=c]{60}{$\|$}};

%\draw [draw=black, every edge/.append style={draw=black, dashed}]
     %(1,1.7)-- node[sloped] {$\parallel$} --(2,3.4);

 \fill (1,1.7) circle (2pt) node[left] {1};
 \fill (2,0) circle (2pt) node[below] {2};
 \fill (3,1.7) circle (2pt) node[right] {3};
 \fill (4,0) circle (2pt) node[right] {4};
 \fill (2,3.4) circle (2pt) node[above] {5};
 \fill (0,0) circle (2pt) node[left] {6};

\draw[->, line width=0.5mm] (4.7,1.7) -- (5.8,1.7);

 \draw[fill, color=blue!30]
(7.5,1.7) -- (8.5,0) -- (6.5,0) -- cycle;
\draw[fill, color=blue!30]
(10.5,0) -- (8.5,0) -- (9.5,1.7) -- cycle;

\draw (7.5,1.7) node[left] {} -- (8.5,0)
node[below] {} -- (6.5,0)
node[left] {} -- cycle;

\draw (10.5,0) node[right] {} -- (8.5,0)
node[below] {} -- (9.5,1.7)
node[right] {} -- cycle;

\draw (10,0.8) node[sloped] {\rotatebox[origin=c]{-60}{$\|$}};

 \fill (7.5,1.7) circle (2pt) node[left] {1};
 \fill (8.5,0) circle (2pt) node[below] {2};
 \fill (9.5,1.7) circle (2pt) node[right] {3};
 \fill (10.5,0) circle (2pt) node[right] {4};
 \fill (8.5,3.4) circle (2pt) node[above] {5};
 \fill (6.5,0) circle (2pt) node[left] {6};

\draw[->, line width=0.5mm] (11.2,1.7) -- (12.3,1.7);

 \draw[fill, color=blue!30]
(14,1.7) -- (15,0) -- (13,0) -- cycle;

\draw (13,0) node[right] {} -- (15,0)
node[below] {} -- (14,1.7)
node[right] {} -- cycle;

\draw (14,0) node[sloped] {$\|$};

 \fill (14,1.7) circle (2pt) node[left] {1};
 \fill (15,0) circle (2pt) node[below] {2};
 \fill (16,1.7) circle (2pt) node[right] {3};
 \fill (17,0) circle (2pt) node[right] {4};
 \fill (15,3.4) circle (2pt) node[above] {5};
 \fill (13,0) circle (2pt) node[left] {6};

\draw[->, line width=0.5mm] (17.7,1.7) -- (18.8,1.7);

 \fill (20.5,1.7) circle (2pt) node[left] {1};
 \fill (21.5,0) circle (2pt) node[below] {2};
 \fill (22.5,1.7) circle (2pt) node[right] {3};
 \fill (23.5,0) circle (2pt) node[right] {4};
 \fill (21.5,3.4) circle (2pt) node[above] {5};
 \fill (19.5,0) circle (2pt) node[left] {6};

 \end{tikzpicture}
 \caption{Leaf-equivalence to the empty $3$-uniform hypergraph \label{fig2}}
\end{figure}

\end{example}

Next, we introduce additional  notation. 
 %Recall that for a real number $x$, we denote $\lfloor x \rfloor$ to be  the floor of $x$. 
\begin{definition} Let $r$ be a positive integer, $1\leq k\leq r-1$, and $0\leq a\leq r-1$. 
For $\sigma=(i_1,\dots,i_k)\in E(\Gamma_{k,r}^a)$ we define the support of $\sigma$ as $$supp(\sigma)=\{i_1,\dots,i_k\},$$
and the weight of $\sigma$ as  $$w(\sigma)=\sum_{j=1}^ki_j.$$ 
If $G$ is a sub-hypergraph of $\Gamma_{k,r}^a$ we define the weight of $G$ as
$$q_a(G)=max\left\lbrace\left\lfloor \frac{w(\sigma)-a}{r} \right\rfloor ~\middle| ~\sigma \in E(G)\right\rbrace,$$
where $\lfloor x \rfloor$ is the floor of $x$.
Finally, we define the weight class of $G$ as 
$$Q_a(G)=\left\lbrace\sigma \in E(G)~\middle|~ \left\lfloor \frac{w(\sigma)-a}{r} \right\rfloor=q_a(G)\right\rbrace=\left\lbrace\sigma\in E(G)~|~w(\sigma)>a+(q_a(G)-1)r+k-1\right\rbrace.$$
\end{definition}
\begin{remark} Technically speaking, $supp(\sigma)$ is nothing else but the hyperedge $\sigma$. In Lemma \ref{lemma10} we need to use both notations (ordered and unordered presentation of hyperedges), so we introduce $supp(\sigma)$ to distinguish between the two settings. 

\end{remark}

\begin{example}  If $G=\Gamma_{3,5}^0$ then $supp((1,3,4))=\{1,3,4\}$, and $w((1,3,4))=1+3+4=8$. Moreover, one can see that $q_0(\Gamma_{3,5}^0)=2$, which is attained for example when $\sigma=(3,4,5)$ since $w((3,4,5))=12=0+2\cdot 5+2$.  Finally, we have 
$$Q_0(\Gamma_{3,5}^0)=\{(1,4,5),(2,3,5),(2,4,5),(3,4,5)\}.$$
\end{example}

\begin{lemma} \label{lemma10} Let $G$ be a sub-hypergraph of $\Gamma_{k,r}^a$. Then there exists $\sigma\in E(G)$ such that $G$ is leaf-equivalent to $G\setminus \{\sigma\}$. In particular, $\Gamma_{k,r}^a$ is leaf-equivalent to the empty $k$-uniform hypergraph. 
\end{lemma}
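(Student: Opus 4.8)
The plan is to prove both statements at once by producing, for an arbitrary nonempty sub-hypergraph $G\subseteq\Gamma_{k,r}^a$, an explicit hyperedge $\sigma\in E(G)$ together with a $(k-1)$-face of $\sigma$ that belongs to no other hyperedge of $G$; iterating this removal (and invoking it along the chain $\Gamma_{k,r}^a=\Gamma_n\supset\Gamma_{n-1}\supset\dots\supset\Gamma_0=\emptyset$, each $\Gamma_{i-1}=\Gamma_i\setminus\{\sigma_i\}$ still being a sub-hypergraph of $\Gamma_{k,r}^a$) then yields leaf-equivalence of $\Gamma_{k,r}^a$ to the empty hypergraph. The candidate $\sigma$ should be chosen from the weight class $Q_a(G)$ — the hyperedges of maximal ``level'' $\lfloor(w(\sigma)-a)/r\rfloor$. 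Among these I would pick $\sigma=(i_1,\dots,i_k)$ of \emph{maximal weight} $w(\sigma)$, and within that of maximal $i_k$, maximal $i_{k-1}$, etc.\ (lexicographically largest from the top). The leaf face to use will be obtained by deleting the smallest vertex $i_1$: set $\sigma_{\widehat 1}=(i_2,\dots,i_k)$.

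The heart of the argument is showing $\sigma_{\widehat 1}$ is not a $(k-1)$-face of any other $\tau\in E(G)$. Suppose $\tau=(i_2,\dots,i_k)\cup\{x\}\in E(G)$ with $x\ne i_1$. Because $\sigma\in Q_a(G)$ has maximal level $q_a(G)$, while every hyperedge has weight in a window of length $k-1$ above $a+(\ell-1)r$ where $\ell$ is its level, I can compare $w(\tau)=w(\sigma)-i_1+x$ against the allowed weight windows. Membership of $\sigma=(i_1,\dots,i_k)$ in $E(\Gamma_{k,r}^a)$ means $i_1+\dots+i_k\equiv a+t\pmod r$ for some $0\le t\le k-1$; membership of $\tau$ forces $w(\sigma)-i_1+x\equiv a+t'\pmod r$ for some $0\le t'\le k-1$. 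If $x>i_1$ then $w(\tau)>w(\sigma)$, and I must rule this out: either $\tau$ lands in a strictly higher level (contradicting maximality of $q_a(G)$, using that the gap between consecutive admissible weight windows is $r-(k-1)\ge 1$), or $\tau$ is in the same level $q_a(G)$ but has strictly larger weight, contradicting the maximal-weight choice of $\sigma$ within $Q_a(G)$. If $x<i_1$, then in particular $x\notin\{i_2,\dots,i_k\}$ and $x<i_1<i_2$, so $x$ is the new smallest element and $w(\tau)<w(\sigma)$; here I use the secondary tie-break (lexicographic maximality of $\sigma$ from the top, i.e.\ $\sigma$ is the largest in its weight class with the given upper part $(i_2,\dots,i_k)$... ) — more precisely, among all hyperedges of $G$ sharing the $(k-1)$-face $(i_2,\dots,i_k)$, the congruence condition pins down which first coordinates are allowed, and I show $i_1$ is forced to be the unique admissible one given that $\sigma$ is at the top level. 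This is the step I expect to be the main obstacle: carefully checking that the modular constraint $z+i_2+\dots+i_k\equiv a+t\pmod r$, $0\le t\le k-1$, has at most one solution $z\in\{1,\dots,r\}$ once we also demand the resulting hyperedge sit at level $q_a(G)$ and $z<i_2$.

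To organize the obstacle, I would introduce the quantity $m=i_2+\dots+i_k$ and note that a hyperedge $(z,i_2,\dots,i_k)$ with $1\le z<i_2$ lies in $E(\Gamma_{k,r}^a)$ iff $z\equiv a+t-m\pmod r$ for some $0\le t\le k-1$, i.e.\ $z$ lies in a residue interval of length $k$ modulo $r$; intersecting this with $\{1,\dots,i_2-1\}$ and with the top-level weight condition $z+m>a+(q_a(G)-1)r+k-1$ should cut it down to the single value $z=i_1$. The case analysis splits on whether the residue interval $[a-m,a-m+k-1]\bmod r$ wraps around past $i_2$; the maximal-weight/maximal-level choice of $\sigma$ is exactly what eliminates the wrap-around possibility. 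Once this uniqueness is established the leaf-equivalence step is immediate, and the final sentence of the lemma follows by the induction described above, each intermediate $\Gamma_{i-1}$ being a sub-hypergraph of $\Gamma_{k,r}^a$ to which the first part of the lemma applies.
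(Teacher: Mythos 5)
Your reduction of the ``in particular'' clause to the existence of a single removable hyperedge is fine, and your case $x>i_1$ is correct: replacing the smallest vertex of a maximal-weight hyperedge by something larger would produce an element of $Q_a(G)$ of strictly larger weight, which is impossible. But the case $x<i_1$ --- the one you yourself flag as the main obstacle --- is not a gap you can close with this choice of $\sigma$ and face: the claim is simply false. Take $G=\Gamma_{3,5}^0$ (the paper's own running example). Here $E(G)=\{(1,2,3),(1,2,4),(1,4,5),(2,3,5),(2,4,5),(3,4,5)\}$, the unique maximal-weight hyperedge is $\sigma=(3,4,5)$ with $w(\sigma)=12$, and your proposed leaf face $\sigma_{\widehat 1}=(4,5)$ is also a face of $(1,4,5)$ and of $(2,4,5)$, both in $E(G)$. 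Your hoped-for uniqueness statement fails as well: the congruence $z+9\equiv t\pmod 5$, $0\le t\le 2$, has the three solutions $z=1,2,3$ in $\{1,\dots,4\}=\{1,\dots,i_2-1\}$, and all three resulting hyperedges sit at the top level $q_0(G)=2$, so adding the level condition does not cut the solution set down to $z=i_1$. (For this $\sigma$ the actual free face is $(3,4)$, obtained by deleting the \emph{largest} vertex --- there is no uniform ``delete the smallest/largest vertex of the extremal hyperedge'' recipe.)

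This is why the paper does not attempt to exhibit the free face explicitly. Its proof is by contradiction: it picks $\theta_0\in Q_a(G)$ to be lexicographically minimal from the top (minimal last coordinate, then minimal next-to-last, and so on), assumes every $(k-1)$-face of every hyperedge is shared, and shows that repeatedly replacing the appropriate coordinate forces a sequence $\theta_0,\theta_1,\dots,\theta_s$ inside $Q_a(G)$ whose weights grow by at least $j_u$ at stage $u$, until $w(\theta_s)\ge w(\theta_0)+k$ exceeds the admissible weight window for level $q_a(G)$ --- a contradiction. The minimality of the coordinates of $\theta_0$ is what guarantees each replacement vertex is \emph{larger} than the one it replaces, which is the mechanism your single-step maximal-weight argument cannot reproduce when the replacement happens below $i_2$. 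If you want to salvage a direct construction you would need a genuinely different selection rule; as written, the argument fails already on $\Gamma_{3,5}^0$.
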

\begin{proof} We will prove this result by contradiction. Assume that there exists $G$ a sub-hypergraph of $\Gamma_{k,r}^a$ such that $G$ is not leaf-equivalent to $G\setminus \{\sigma\}$ for any $\sigma\in E(G)$. This means that for every $\sigma=(i_1,\dots,i_k)\in E(G)$, and any $1\leq s\leq k$ there exists a $\tau_s\in E(G)$, $\tau_s\neq \sigma$, such that $\sigma_{\widehat{s}}=(i_1,\dots,\hat{i_s},\dots,i_k)$ is a $(k-1)$-face of $\tau_s$. 

First, we construct an element $\theta_0=(m_0(1),\dots,m_0(k))\in Q_a(G)$ determined by 
%$$m_k=min\{x~|~ 1\leq i_1<i_2<\dots<i_{k-1}<x\leq r, ~(i_1,\dots,i_{k-1},x)\in Q(G)\}.$
\[
m_0(k) = 
 min\left\lbrace 
 x\in \{1,2,\dots, r\}\middle|\;
\begin{aligned}
& \text{there exist }i_1,i_2,\dots,i_{k-1}\in \{1,2,\dots,r\} \text{ such that}\\
 & 1\leq i_1<i_2<\dots<i_{k-1}<x\leq r, \text{ and }\\
 & ~(i_1,\dots,i_{k-1},x)\in Q_a(G)
\end{aligned}
\right\rbrace,
\]
and for $1\leq j<k$ we define recursively  
%$$m_j=min\{x~|~ 1\leq i_1<\dots<i_{j-1}<x<m_{j+1}<\dots<m_k\leq r, ~(i_1,\dots,i_{j-1},x,m_{j+1},\dots,m_k)\in Q(G)\}.$$
\[
m_0(j) = 
 min\left\lbrace 
 x\in \{1,2,\dots, r\}\middle|\;
\begin{aligned}
& \text{there exist }i_1,i_2,\dots,i_{j-1}\in \{1,2,\dots,r\} \text{ such that}\\
 & 1\leq i_1<\dots<i_{j-1}<x<m_0(j+1)<\dots<m_0(k)\leq r,  \text{ and }\\
 & ~(i_1,\dots,i_{j-1},x,m_0(j+1),\dots,m_0(k))\in Q_a(G)
\end{aligned}
\right\rbrace.
\]

It is obvious from our construction that $$\theta_0=(m_0(1),\dots,m_0(k))\in Q_a(G).$$ In particular 
$$w(\theta_0)=m_0(1)+\dots+m_0(k)=a+q_a(G)r+t_0$$
for some $0\leq t_0\leq k-1$.

The plan is to show that under our assumption (i.e. $G$ is not leaf-equivalent to $G\setminus \{\sigma\}$ for any $\sigma\in E(G)$), there exist $$0=j_0<j_1<\dots<j_{s-1}<j_s=k,$$ and $$\theta_u=(m_{u}(1),m_{u}(2),\dots,m_{u}(k))\in Q_a(G)$$ for each $0\leq u\leq s$ such that 
\begin{enumerate}
%\item $\theta_{u}=(m_{u}(1),m_{u}(2),\dots,m_{u}(k))\in Q_a(G)$ for all $0\leq u\leq s$, 
\item if $u<s$ then $supp(\theta_{u})\setminus \{m_u(j_u+1)\}=supp(\theta_{u+1})\setminus \{m_{u+1}(j_{u+1})\}$, 
\item $m_u(z)=m_0(z)$ for all $z\geq j_{u}+1$, 
\item $w(\theta_{u})\geq w(\theta_0)+j_u$. 
\end{enumerate}

For this we use induction. First we construct $j_1$ and $\theta_1$. Since $\theta_0\in Q_a(G)\subseteq E(G)$ it means that the $(k-1)$-face $(m_0(2),\dots,m_0(k))$ must be shared by some other hyperedge $\theta_1=(m_1(1),\dots,m_1(k))\in E(G)$, with $\theta_1\neq \theta_0$. In other words, there exists $x_1\notin supp(\theta_0)$ such that 
$$supp(\theta_{1})\setminus \{x_1\}=supp(\theta_{0})\setminus \{m_0(1)\}=\{m_0(2),\dots,m_0(k)\}.$$
With this notation we have
$$w(\theta_1)=w(\theta_0)+x_1-m_0(1).$$ 
Notice that $x_1\geq 1$, and $m_0(1)\leq r-k+1$ (that is because $1\leq m_0(1)<m_0(2)<\dots<m_0(k)\leq r$). This implies that $x_1-m_0(1)\geq k-r$, thus 
\begin{eqnarray*}w(\theta_1)&=&w(\theta_0)+x_1-m_0(1)\\
&\geq& a+q_a(G)r+t_0+k-r\\
&=&a+(q_a(G)-1)r+k+t_0\\
&>&a+(q_a(G)-1)r+k-1,
\end{eqnarray*} which means that $\theta_1\in Q_a(G)$. 

Since $m_0(2),m_0(3),\dots,m_0(k)\in supp(\theta_0)\cap  supp(\theta_1)$, by the minimality of $m_0(1)$ it follows that $m_0(1)<x_1$, and so there exists $j_0=0<j_1\leq k$ such that $$m_0(j_1)<x_1<m_0(j_1+1).$$

To summarize, we have  
 \begin{eqnarray*}
m_1(z)=\left\{
 \begin{aligned}
&m_0(z+1)  \text{ for }1\leq z<j_1,\\ 
&x_{1} \text{ for } z=j_{1}\\
&m_0(z) \text{ for }j_1+1\leq z\leq k. 
\end{aligned}
\right. 
\end{eqnarray*}

Finally, because $1\leq m_0(1)<m_0(2)<\dots <m_0(j_1)<x_1=m_1(j_1)<m_1(j_1+1)=m_0(j_1+1)$, we get that $x_1-m_0(1)=m_1(j_1)-m_0(1)\geq j_1$. Therefore,  
$$w(\theta_1)=w(\theta_0)+m_1(j_1)-m_0(1)\geq w(\theta_0)+j_1,$$
which concludes the construction of $j_1$ and $\theta_1$ with the desired properties.

Next, assume that we constructed  
$$0=j_0<j_1<\dots<j_{u}<k,$$ and $$\theta_t=(m_{t}(1),m_{t}(2),\dots,m_{t}(k))\in Q_a(G)$$ for each $0\leq t\leq u$ with the desired properties. We want to find $j_u<j_{u+1}\leq k$, and $\theta_{u+1}=(m_{u+1}(1),m_{u+1}(2),\dots,m_{u+1}(k))\in Q_a(G)$ that extend our construction.

Just like above, the $(k-1)$-face 
$(m_u(1),\dots,m_u(j_u),\widehat{m_u(j_u+1)},m_u(j_u+2),\dots,m_u(k))$ of  $\theta_u\in Q(G)\subseteq E(G)$  must be shared by some other hyperedge $\theta_{u+1}=(m_{u+1}(1),\dots,m_{u+1}(k))\in E(G)$ with $\theta_{u+1}\neq \theta_u$. In other words, there exists $x_{u+1}\notin supp(\theta_u)$ such that 
$$supp(\theta_{u+1})\setminus \{x_{u+1}\}=supp(\theta_{u})\setminus \{m_u(j_u+1)=m_0(j_u+1)\}=\{m_{u}(1),\dots,\widehat{m_u(j_u+1)},\dots,
m_u(k)\}.$$
In particular we have 
$$w(\theta_{u+1})=w(\theta_u)+x_{u+1}-m_u(j_u+1).$$ 
Notice that $x_{u+1}\geq 1$, and $m_u(j_u+1)\leq r-k+j_u+1$ (that is because $1\leq m_u(j_u+1)<m_u(j_u+2)<\dots<m_u(k)\leq r$). This implies that $x_{u+1}-m_u(j_u+1)\geq k-r-j_u$, and so 
\begin{eqnarray*}
w(\theta_{u+1})&=&w(\theta_u)+x_{u+1}-m_u(j_u+1)\\
&\geq& w(\theta_0)+j_u +k-r-j_u\\
&=&a+q_a(G)r+t_0+k-r\\
&=&a+(q_a(G)-1)r+k+t_0\\
&>&a+(q_a(G)-1)r+k-1,
\end{eqnarray*} which means that $\theta_{u+1}\in Q_a(G)$. 

Since $m_u(1),\dots,m_u(j_u),m_0(j_u+2)=m_u(j_u+2),m_0(j_u+3)=m_u(j_u+3),\dots,m_0(k)=m_u(k)\in supp(\theta_u)\cap supp(\theta_{u+1})$, by the minimality of $m_0(j_u+1)=m_u(j_u+1)$, it follows that $m_u(j_u+1)=m_0(j_u+1)<x_{u+1}$. Thus there exists $j_{u}<j_{u+1}\leq k$ such that $$m_u(j_{u+1})<x_{u+1}<m_u(j_{u+1}+1).$$ 

To summarize, we have  
 \begin{eqnarray*}
m_{u+1}(z)=\left\{
 \begin{aligned}
&m_u(z)  \text{ for }1\leq z\leq j_{u},\\
&m_u(z+1)  \text{ for }j_u+1\leq z<j_{u+1},\\ 
&x_{u+1} \text{ for } z=j_{u+1},\\
&m_u(z)=m_0(z) \text{ for }j_{u+1}+1\leq z\leq k.  
\end{aligned}
\right. 
\end{eqnarray*}

%We denote $m_{j_{u+1}}^{1}=x_{u+1}$, $m_z^{u+1}=m_{z+1}^u$ for $1\leq z<j_{u+1}$ and $m_z^{u+1}=m_{z}^u=m_z$ for $j_{u+1}+1\leq z\leq r$. 

Moreover, because $1\leq m_u(j_u+1)<m_u(j_u+2)<\dots <m_u(j_{u+1})<x_{u+1}=m_{u+1}(j_{u+1})$ we get that $$x_{u+1}-m_u(j_{u}+1)=m_{u+1}(j_{u+1})-m_u(j_{u}+1)\geq j_{u+1}-j_u.$$ Therefore, 
$$w(\theta_{u+1})=w(\theta_u)+m_{u+1}(j_{u+1})-m_u(j_{u}+1)\geq w(\theta_0)+j_u+(j_{u+1}-j_u)=w(\theta_0)+j_{u+1}.$$

Since the sequence $j_0<j_1<..<j_u<...\leq k$ is strictly increasing, it means that there exists $s>0$ such that $j_s=k$, which completes our construction.

To finalize the proof, notice that the above construction  leads us to a contradiction. Indeed, 
\begin{eqnarray*}
w(\theta_s)&\geq& w(\theta_0)+j_s\\
&\geq& a+q(G)r+t_0+k\\
&>&a+q(G)r+k-1,
\end{eqnarray*} which means that  $$\left\lfloor \frac{w(\theta_s)-a}{r} \right\rfloor >q_a(G).$$ 
This obviously contradicts the definition of  $q_a(G)$. Hence, our assumption that $G$ is not leaf-equivalent to $G\setminus \{\sigma\}$ for any $\sigma\in E(G)$ is false. 
\end{proof}

\begin{remark}
In the particular case when $r$ is a prime number,  Lemma \ref{lemma10} is a consequence of the results from \cite{lmr}.  However, the case when $r$ is not a prime number is not treated there. 
\end{remark}

\begin{lemma} Let $1\leq k\leq r$ and $r+1\leq j_{k+1}<j_{k+2}<\dots <j_r\leq rd$. Then the  $r$-uniform hypergraph $\Gamma_{k,r}(j_{k+1},\dots,j_{r})$  is leaf-equivalent to the empty $r$-uniform hypergraph. Moreover, this leaf-equivalence is achieved via  $(r-1)$-faces of type  $(i_1,\dots, \hat{i_s},\dots,i_k,j_{k+1},\dots,j_r)$ (i.e. those that contain all the $j's$). \label{lemma4}
\end{lemma}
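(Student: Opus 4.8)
The plan is to transfer the leaf-equivalence of $\Gamma_{k,r}^{-a}$ to the empty hypergraph, established in Lemma~\ref{lemma10}, across the bijection $\psi$ from Remark~\ref{lemmagamma}. Set $a=j_{k+1}+\dots+j_r\ (mod\ r)$, so that $\psi:E(\Gamma_{k,r}^{-a})\to E(\Gamma_{k,r}(j_{k+1},\dots,j_r))$, $\psi(i_1,\dots,i_k)=(i_1,\dots,i_k,j_{k+1},\dots,j_r)$, is a bijection on hyperedge sets. The key observation is that $\psi$ is also a bijection on $(k-1)$-faces: a $(k-1)$-face $(i_1,\dots,\hat{i_s},\dots,i_k)$ of a hyperedge of $\Gamma_{k,r}^{-a}$ corresponds under "appending the $j$'s" to the $(r-1)$-face $(i_1,\dots,\hat{i_s},\dots,i_k,j_{k+1},\dots,j_r)$ of the image hyperedge in $\Gamma_{k,r}(j_{k+1},\dots,j_r)$, and conversely every $(r-1)$-face of a hyperedge of $\Gamma_{k,r}(j_{k+1},\dots,j_r)$ that contains all of $j_{k+1},\dots,j_r$ arises this way. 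Since every hyperedge of $\Gamma_{k,r}(j_{k+1},\dots,j_r)$ contains $j_{k+1},\dots,j_r$ by definition, a $(r-1)$-face obtained by deleting one of the $j$'s is automatically shared with no hyperedge (it fails to contain all $j$'s and so cannot be a face of any hyperedge of this hypergraph at all in the relevant sense); this is why leaf-equivalence here is forced to proceed via the $(r-1)$-faces that contain all the $j$'s, giving the "Moreover" clause for free.

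First I would make precise the correspondence of faces: for any sub-hypergraph $G\subseteq \Gamma_{k,r}^{-a}$, a $(k-1)$-face $\tau=(i_1,\dots,\hat{i_s},\dots,i_k)$ of $\sigma=(i_1,\dots,i_k)\in E(G)$ is a $(k-1)$-face of no other hyperedge of $G$ if and only if the $(r-1)$-face $\tilde\tau=(i_1,\dots,\hat{i_s},\dots,i_k,j_{k+1},\dots,j_r)$ of $\psi(\sigma)$ is a $(r-1)$-face of no other hyperedge of $\psi(G)$. Indeed any hyperedge of $\Gamma_{k,r}(j_{k+1},\dots,j_r)$ having $\tilde\tau$ as a face must contain $j_{k+1},\dots,j_r$, hence equals $(i_1',\dots,i_k',j_{k+1},\dots,j_r)=\psi(i_1',\dots,i_k')$ for some $(i_1',\dots,i_k')\in E(\Gamma_{k,r}^{-a})$ having $\tau$ as a face; and membership in $\psi(G)$ corresponds exactly to membership of $(i_1',\dots,i_k')$ in $G$ by injectivity of $\psi$. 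So $G$ and $G\setminus\{\sigma\}$ are leaf-equivalent via $\tau$ if and only if $\psi(G)$ and $\psi(G)\setminus\{\psi(\sigma)\}$ are leaf-equivalent via $\tilde\tau$.

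Then I would run the deletion sequence. By Lemma~\ref{lemma10}, $\Gamma_{k,r}^{-a}$ is leaf-equivalent to the empty $k$-uniform hypergraph, so there is a chain $\emptyset=G_0\subset G_1\subset\dots\subset G_n=\Gamma_{k,r}^{-a}$ with each $G_i$ leaf-equivalent to $G_{i-1}=G_i\setminus\{\sigma_i\}$ via some $(k-1)$-face of $\sigma_i$. Applying $\psi$ and using the correspondence above, $\emptyset=\psi(G_0)\subset\psi(G_1)\subset\dots\subset\psi(G_n)=\Gamma_{k,r}(j_{k+1},\dots,j_r)$ exhibits $\Gamma_{k,r}(j_{k+1},\dots,j_r)$ as leaf-equivalent to the empty $r$-uniform hypergraph, with every step performed via a face of the form $(i_1,\dots,\hat{i_s},\dots,i_k,j_{k+1},\dots,j_r)$. (One should note $\psi$ is order-preserving with respect to set inclusion of hyperedge sets, which is immediate since it is a bijection applied edge by edge.)

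I do not expect a serious obstacle here; the content is essentially all in Lemma~\ref{lemma10}. The one point requiring care — and the closest thing to a "hard part" — is verifying that deleting a $j$ from an $(r-1)$-face of $\Gamma_{k,r}(j_{k+1},\dots,j_r)$ really cannot accidentally produce a face shared in a way that would let an alternative leaf-equivalence bypass the $j$'s; but this is handled cleanly by the observation that \emph{every} hyperedge of $\Gamma_{k,r}(j_{k+1},\dots,j_r)$ contains all of $j_{k+1},\dots,j_r$, so any $(r-1)$-face used for a leaf-equivalence must lie inside some hyperedge and hence can omit at most one element, which — to leave a valid hyperedge minus one vertex when the removed vertex is among the first $k$ — must be one of the $i$'s. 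I would spell this out carefully but it is a short argument.
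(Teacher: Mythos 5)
Your proof is correct and follows the paper's argument exactly: you transfer the deletion sequence of Lemma~\ref{lemma10} for $\Gamma_{k,r}^{-a}$ across the bijection $\psi$, after checking that appending the $j$'s gives a bijection between unshared $(k-1)$-faces and unshared $(r-1)$-faces containing all the $j$'s (this is precisely the paper's map $\xi$). One non-load-bearing aside is off: an $(r-1)$-face obtained by deleting one of the $j$'s from a hyperedge is still a face of that hyperedge (and in fact of no other hyperedge of $\Gamma_{k,r}(j_{k+1},\dots,j_r)$), so the leaf-equivalence is not \emph{forced} to proceed via faces containing all the $j$'s --- the ``Moreover'' clause is just a property of the particular sequence you construct, and its real purpose is to guarantee disjointness from the faces of the other $\Gamma$'s when the lemma is applied in Theorem~\ref{Th17r}.
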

\begin{proof}  Take $a=j_{k+1}+\dots+j_r~(mod~r)$. As we noticed in Remark \ref{lemmagamma},  the map $\psi:E(\Gamma_{k,r}^{-a})\to E(\Gamma_{k,r}(j_{k+1},\dots,j_{r}))$ determined by 
$$\psi((i_1,\dots,i_k))=(i_1,\dots,i_k,j_{k+1},\dots,j_r)$$ 
gives a bijection between $E(\Gamma_{k,r}^{-a})$ and  $E(\Gamma_{k,r}(j_{k+1},\dots,j_{r}))$. 

Moreover, consider the map \( \xi: E_{k-1}( \Gamma_{k,r}^{-a}) \to E_{r-1}\left( \Gamma_{k,r}(j_{k+1}, \dots, j_r) \right) \) defined by
   \[ \xi( (i_1, \dots, \widehat{i_s}, \dots, i_k) ) = (i_1, \dots, \widehat{i_s}, \dots, i_k, j_{k+1}, \dots, j_r ).\]
Notice that while the map \( \xi \) is not surjective on the set \( E_{r-1}\left( \Gamma_{k,r}(j_{k+1}, \dots, j_r) \right) \), it induces a bijection onto its image. Therefore, we may view this induced map \( \xi \) as a bijection 
\begin{eqnarray*} \xi: E_{k-1}( \Gamma_{k,r}^{-a})   \xrightarrow{\sim} JE_{r-1}(\Gamma_{k,r}(j_{k+1},\dots,j_r))
\end{eqnarray*}
where  
\begin{eqnarray*}  JE_{r-1}(\Gamma_{k,r}(j_{k+1},\dots,j_r))=\left\lbrace \sigma_{\widehat{s}} \in E_{r-1}(\Gamma_{k,r}(j_{k+1},\dots,j_r)) \; \middle| \; \begin{aligned}
& \sigma_{\widehat{s}}=(i_1,\dots, \widehat{i_s},\dots,i_k,j_{k+1},\dots,j_r)\\
&\text{ where  } 1\leq i_1<\dots<i_k\leq r
\end{aligned} \right\rbrace, 
\end{eqnarray*}
is the set of those $(r-1)$-faces in $\Gamma_{k,r}(j_{k+1},\dots,j_r)$ that contain all the $j's$. 

Since $\Gamma_{k,r}^{-a}$ is leaf-equivalent to the empty $k$-uniform hypergraph, we can order its hyperedges $\sigma_1, \sigma_2,\dots,\sigma_n$ such that the $k$-uniform hypergraphs 
$$\Gamma_i=\Gamma_{k,r}^{-a}\setminus \{\sigma_{i+1},\sigma_{i+2},\dots,\sigma_n\}~\text{ for } 1\leq i\leq n,$$ give a leaf-equivalence between $\Gamma_{k,r}^{-a}$ and the empty $k$-uniform hypergraph. We denote by $f_i$ the $(k-1)$-face of $\sigma_i$ that does not belong to any another hyperedge of $\Gamma_i$. 

With this notation, one can see that the $r$-uniform hypergraphs  
$$\Delta_i=\Gamma_{k,r}(j_{k+1},\dots,j_{r})\setminus \{\psi(\sigma_{i+1}),\psi(\sigma_{i+2}),\dots,\psi(\sigma_n)\}~\text{ for } 1\leq i\leq n,$$ 
give a leaf-equivalence between $\Gamma_{k,r}(j_{k+1},\dots,j_{r})$, and the empty $r$-uniform hypergraph. Indeed, since $E(\Gamma_{k,r}^{-a})=\{\sigma_1,\dots,\sigma_n\}$ and $\psi$ is bijective, we get that 
$$E(\Gamma_{k,r}(j_{k+1},\dots,j_{r}))=\{\psi(\sigma_{1}),\psi(\sigma_{2}),\dots,\psi(\sigma_n)\}.$$ 
So, 
\begin{eqnarray*}
E(\Delta_{i-1})&=& E(\Gamma_{k,r}(j_{k+1},\dots,j_{r}))\setminus \{\psi(\sigma_{i}),\psi(\sigma_{i+1}),\dots,\psi(\sigma_n)\}\\
&=& \{\psi(\sigma_{1}),\psi(\sigma_{2}),\dots,\psi(\sigma_{i-1})\}\\
&=&E(\Delta_i) \setminus \{\psi(\sigma_{i})\}.
\end{eqnarray*}
Finally, notice that $\xi(f_i)\in JE_{r-1}(\Gamma_{k,r}(j_{k+1},\dots,j_r))$ is an $(r-1)$-face of $\psi(\sigma_{i})$  that does not belong to any other hyperedge of $\Delta_i$. 
Thus, $\Delta_{i-1}$ and $\Delta_i$ are leaf-equivalent via the $(r-1)$-face \(\xi(f_i)\), showing that \(\Gamma_{k,r}(j_{k+1}, \dots , j_r)\) is leaf-equivalent to the empty \(r\)-uniform hypergraph.  
\end{proof}
The following result is obvious, but for completeness we give a proof. 

\begin{lemma} Let $\Gamma$ be a $k$-uniform hypergraph and $\sigma=(i_1,\dots,i_k)\in E(\Gamma)=E_k(\Gamma)$ such that $\Gamma$ and 
$\Gamma \setminus\{\sigma\}$ are leaf-equivalent.  Then $b_{k-1}(\Gamma)=b_{k-1}(\Gamma \setminus\{\sigma\})$. In particular, if a $k$-uniform hypergraph $\Gamma$ is leaf-equivalent to the empty $k$-uniform hypergraph then $b_{k-1}(\Gamma)=0$. \label{lemmaC}
\end{lemma}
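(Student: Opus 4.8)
The plan is to analyze how the chain complex $\mathcal{K}(\Gamma)$ changes when we pass from $\Gamma$ to $\Gamma\setminus\{\sigma\}$, using the hypothesis that $\Gamma$ and $\Gamma\setminus\{\sigma\}$ are leaf-equivalent via some $(k-1)$-face $\sigma_{\widehat{s}}=(i_1,\dots,\widehat{i_s},\dots,i_k)$. The key observation is that since $\sigma\in E_k(\Gamma)$ is a $k$-hyperedge and all faces of dimension $\leq k-2$ are unaffected (they are still faces of the remaining hyperedges, or rather: removing $\sigma$ can only possibly remove the $k$-face $\sigma$ itself and the one $(k-1)$-face $\sigma_{\widehat{s}}$ — all the other $(k-1)$-faces of $\sigma$ are shared with some $\tau\neq\sigma$ in $\Gamma\setminus\{\sigma\}$, and all lower faces are likewise retained). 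Hence the only difference between the chain complexes $\mathcal{K}(\Gamma)$ and $\mathcal{K}(\Gamma\setminus\{\sigma\})$ is that $C_{k-1}(\Gamma)$ has one extra basis vector $[\sigma]$ and $C_{k-2}(\Gamma)$ has one extra basis vector $[\sigma_{\widehat{s}}]$.

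Concretely, I would write $C_{k-1}(\Gamma)=C_{k-1}(\Gamma\setminus\{\sigma\})\oplus \mathbb{Q}[\sigma]$ and $C_{k-2}(\Gamma)=C_{k-2}(\Gamma\setminus\{\sigma\})\oplus \mathbb{Q}[\sigma_{\widehat{s}}]$, with all other $C_j$ identical, and observe that $\partial_{k-1}([\sigma])=\pm[\sigma_{\widehat{s}}] + (\text{terms in } C_{k-2}(\Gamma\setminus\{\sigma\}))$, where the coefficient of $[\sigma_{\widehat{s}}]$ is $(-1)^{s+1}\neq 0$. This exhibits $\mathcal{K}(\Gamma)$ as the mapping-cone-type extension of $\mathcal{K}(\Gamma\setminus\{\sigma\})$ by the acyclic two-term complex $0\to \mathbb{Q}[\sigma]\xrightarrow{\cong}\mathbb{Q}[\sigma_{\widehat{s}}]\to 0$ sitting in degrees $k-1$ and $k-2$. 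More precisely, after a change of basis in $C_{k-2}(\Gamma)$ (replacing the complement so that $\partial_{k-1}[\sigma]$ becomes exactly $(-1)^{s+1}[\sigma_{\widehat{s}}]$), the complex $\mathcal{K}(\Gamma)$ splits as a direct sum of $\mathcal{K}(\Gamma\setminus\{\sigma\})$ and the contractible complex $\mathbb{Q}\xrightarrow{(-1)^{s+1}}\mathbb{Q}$. Taking homology of a direct sum gives $H_j(\mathcal{K}(\Gamma))\cong H_j(\mathcal{K}(\Gamma\setminus\{\sigma\}))$ for all $j$, and in particular $b_{k-1}(\Gamma)=b_{k-1}(\Gamma\setminus\{\sigma\})$.

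For the "in particular" statement, one iterates: if $\emptyset=\Gamma_0\subset\Gamma_1\subset\dots\subset\Gamma_n=\Gamma$ is a leaf-equivalence sequence, then $b_{k-1}(\Gamma)=b_{k-1}(\Gamma_{n-1})=\dots=b_{k-1}(\Gamma_0)=b_{k-1}(\emptyset)=0$, since the empty $k$-uniform hypergraph has $C_{k-1}=0$, hence zero top homology. (One subtlety: along the sequence the intermediate hypergraphs $\Gamma_i$ may not have complete $(k-2)$-skeleton, but that does not matter — the argument above works verbatim for arbitrary $k$-uniform hypergraphs, comparing $\Gamma_i$ with $\Gamma_{i-1}=\Gamma_i\setminus\{\sigma_i\}$.)

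The main obstacle, and the one point requiring care, is justifying the claim that removing $\sigma$ affects \emph{only} the basis vectors $[\sigma]$ in degree $k-1$ and $[\sigma_{\widehat{s}}]$ in degree $k-2$, i.e. that no $(k-1)$-face other than $\sigma_{\widehat{s}}$ and no lower-dimensional face disappears. The $(k-1)$-level is immediate from the definition of leaf-equivalence (every other $(k-1)$-face of $\sigma$ is shared). For the faces of dimension $\leq k-2$: each such face is a face of $\sigma$, but since $\sigma$ has at least two $(k-1)$-faces remaining in $\Gamma\setminus\{\sigma\}$ (here one uses $k\geq 2$; the case $k=1$ is trivial or vacuous), every sub-face of $\sigma$ of dimension $\leq k-2$ is a face of some surviving hyperedge, so nothing else is lost. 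Once this bookkeeping is pinned down, the algebra is the routine direct-sum/change-of-basis argument sketched above.
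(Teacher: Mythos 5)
There is a genuine error in your argument. You assert that, because $\Gamma$ and $\Gamma\setminus\{\sigma\}$ are leaf-equivalent via $\sigma_{\widehat{s}}$, ``every other $(k-1)$-face of $\sigma$ is shared'' with some surviving hyperedge, and hence that the two chain complexes differ only by the basis vectors $[\sigma]$ in degree $k-1$ and $[\sigma_{\widehat{s}}]$ in degree $k-2$. The definition of leaf-equivalence does not say this: it only requires that \emph{there exists} one $(k-1)$-face of $\sigma$ not shared by any other hyperedge; the remaining $(k-1)$-faces of $\sigma$, and indeed faces of all lower dimensions, may also disappear when $\sigma$ is removed. The paper's own Remark \ref{rembetti} is a counterexample to your stronger conclusion: for $\Gamma$ with $E(\Gamma)=\{(1,2,6),(1,3,5),(2,3,4)\}$ and $\sigma=(1,3,5)$, all three $2$-faces of $\sigma$ (and the vertex $5$) are lost, the complex does not split as you describe, and $b_1(\Gamma)=1$ while $b_1(\Gamma\setminus\{\sigma\})=0$. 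So your claim that $H_j(\mathcal{K}(\Gamma))\cong H_j(\mathcal{K}(\Gamma\setminus\{\sigma\}))$ for \emph{all} $j$ is false, and the mapping-cone/direct-sum packaging cannot be repaired.

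The lemma only asserts equality of the \emph{top} Betti number, and for that the correct (and much shorter) argument is the one whose germ is already present in your write-up: since $\Gamma$ is $k$-uniform there are no $k$-chains, so $H_{k-1}=Z_{k-1}=\ker\partial_{k-1}$, which is insensitive to what happens in degrees $\leq k-2$. If $c=\sum_i\alpha_i[\tau_i]+\alpha[\sigma]$ is a $(k-1)$-cycle, then the coefficient of $[\sigma_{\widehat{s}}]$ in $\partial_{k-1}c$ is $(-1)^{s-1}\alpha$, because $\sigma_{\widehat{s}}$ is a face of no hyperedge other than $\sigma$; hence $\alpha=0$ and $Z_{k-1}(X(\Gamma))=Z_{k-1}(X(\Gamma\setminus\{\sigma\}))$. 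This is exactly the paper's proof. Your iteration for the ``in particular'' part is fine once this is fixed, but you should drop the claim that all Betti numbers are preserved and the accompanying ``bookkeeping'' paragraph, both of which rest on the misreading of the definition.
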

\begin{proof} 
Since  $\Gamma$ is $k$-uniform, the simplicial complex  $X(\Gamma)$ does not have cells of dimension strictly larger than $k-1$. Thus, $H_{k-1}(X(\Gamma))=Z_{k-1}(X(\Gamma))$. 

Let $\sigma_{\widehat{s}}=(i_1,\dots,\widehat{i_s},\dots,i_k)$ be the $(k-1)$-face of the hyperedge $\sigma$ that does not belong to any other hyperedge of $\Gamma$. We denote  $[\sigma]=[i_1,i_2,\dots,i_k]\in C_{k-1}(X(\Gamma))$, the $(k-1)$-dimensional cell of $X(\Gamma)$ corresponding to $\sigma$, and $[\sigma_{\widehat{s}}]=[i_1,\dots,\hat{i_s},\dots,i_k]\in C_{k-2}(X(\Gamma))$, the $(k-2)$-dimensional cell of $X(\Gamma)$ corresponding to the face $\sigma_{\widehat{s}}$. 

Let 
$\partial_{k-1}:C_{k-1}(X(\Gamma))\to C_{k-2}(X(\Gamma))$ be the boundary map for the homology complex. Take $$\sum_{i=1}^n\alpha_i[\tau_i]+\alpha[\sigma]\in Ker(\partial_{k-1}),$$ where $\tau_i$ are hyperedges in $\Gamma \setminus\{\sigma\}$, and $\alpha_i, \alpha\in \mathbb{Q}$. We have
\begin{eqnarray*}
0&=&\partial_{k-1}(\sum_{i=1}^n\alpha_i[\tau_i]+\alpha[\sigma])\\
&=&\sum_{i=1}^n\alpha_i\partial_{k-1}([\tau_i])+\alpha\partial_{k-1}([\sigma])\\
&=& \sum_{i=1}^n\alpha_i\partial_{k-1}([\tau_i])+\alpha(\sum_{j=1}^k(-1)^{j-1}[i_1,\dots,\hat{i_j},\dots,i_k]). 
\end{eqnarray*}
Since the $(k-1)$-face $\sigma_{\widehat{s}}=(i_1,\dots,\hat{i_s},\dots,i_k)$ does not belong to any hyperedge of $\Gamma\setminus\{\sigma\}$, it follows that on the right side of the above equality the coefficient of $[\sigma_{\widehat{s}}]=[i_1,\dots,\hat{i_s},\dots,i_k]$ is $(-1)^{s-1}\alpha$. On the other hand, on the left side of the equality the coefficient of $[\sigma_{\widehat{s}}]$ is $0$. Therefore, we must have $\alpha=0$. In other words,  we get that  $Z_{k-1}(X(\Gamma))=Z_{k-1}(X(\Gamma \setminus\{\sigma\}))$, which implies that $b_{k-1}(\Gamma)=b_{k-1}(\Gamma \setminus\{\sigma\})$. 
\end{proof}

\begin{remark} In general, if we remove a hyperedge $\sigma$ from a $k$-uniform hypergraph $\Gamma$, it is not true that $\Gamma$ and $\Gamma \setminus\{\sigma\}$ will have the same Betti numbers for all $i\leq k-1$ (even if the condition from Lemma \ref{lemmaC} is satisfied). For example, consider the $3$-uniform hypergraph $\Gamma$ from Figure \ref{fig2} determined by  $V(\Gamma)=\{1,2,3,4,5,6\}$, $E(\Gamma)=\{(1,2,6),(1,3,5),(2,3,4)\}$, and take $\sigma=(1,3,5)$. One can see that $b_{2}(\Gamma)=b_{2}(\Gamma \setminus\{\sigma\})$ (this follows from Lemma \ref{lemmaC}),  but $b_{1}(\Gamma)=1$ while $b_{1}(\Gamma \setminus\{\sigma\})=0$. 
\label{rembetti}
\end{remark}

\section{The Main Result}
\label{section5}
We are now ready to prove the main result of this paper. 

\begin{theorem}  \label{Th17r}  Let $r\geq 1$ and $d\geq 1$ two integers. Then $\mathcal{E}_d^{(r)}=(\Omega_1^{(r,d)}, \Omega_2^{(r,d)},\dots, \Omega_d^{(r,d)})$ is an acyclic  $d$-partition of the $r$-uniform  complete hypergraph $K_{rd}^{(r)}$. 
\end{theorem}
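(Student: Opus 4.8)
The plan is to reduce the statement to computing a single Betti number. By Lemma \ref{lemma2}, all the hypergraphs $\Omega_i^{(r,d)}$ are isomorphic to $\Omega_1^{(r,d)}$, so it suffices to show $\Omega_1^{(r,d)}$ is acyclic; and by Lemma \ref{lemma3} together with Remark \ref{rem211}, $\mathcal{E}_d^{(r)}$ is a homogeneous partition and acyclicity reduces to the single equality $b_{r-1}(\Omega_1^{(r,d)})=0$. So the entire problem becomes: show that the top reduced homology of the simplicial complex $X(\Omega_1^{(r,d)})$ vanishes.

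The second step is to exploit the decomposition of $\Omega_1^{(r,d)}$ into the pieces $\Gamma_{k,r}(j_{k+1},\dots,j_r)$ from Remark \ref{rem3}. Each such piece is leaf-equivalent to the empty $r$-uniform hypergraph by Lemma \ref{lemma4}, and crucially that leaf-equivalence is realized via $(r-1)$-faces in $JE_{r-1}(\Gamma_{k,r}(j_{k+1},\dots,j_r))$, i.e.\ faces that contain all the $j$'s. The idea is to build up $\Omega_1^{(r,d)}$ from the empty hypergraph by adjoining these pieces one at a time, say in order of decreasing $k$ (so the piece $\Gamma_{r,r}(\emptyset)$, which is just the single hyperedge $(1,2,\dots,r)$, comes first, then the pieces with $k=r-1$, and so on). Within each piece we then add its hyperedges in the leaf-equivalence order supplied by Lemma \ref{lemma4}. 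I would then argue that at each such step, the $(r-1)$-face $\xi(f_i)$ used inside the piece is genuinely new to the whole of $\Omega_1^{(r,d)}$ as built so far — not merely new within the piece. Once this is established, Lemma \ref{lemmaC} applied repeatedly gives $b_{r-1}(\Omega_1^{(r,d)})=b_{r-1}(\emptyset)=0$, finishing the proof.

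The main obstacle is precisely the global freshness claim in the previous paragraph: the leaf-equivalence in Lemma \ref{lemma4} is stated \emph{within} a single $\Gamma_{k,r}(j_{k+1},\dots,j_r)$, and a priori the $(r-1)$-face $\xi(f_i)=(i_1,\dots,\widehat{i_s},\dots,i_k,j_{k+1},\dots,j_r)$ could also be a face of some hyperedge lying in a different piece $\Gamma_{k',r}(j'_{k'+1},\dots,j'_{r})$ that was added earlier, or later, in the global ordering. So I need to control which pieces can contribute an $(r-1)$-face of the form $(i_1,\dots,\widehat{i_s},\dots,i_k,j_{k+1},\dots,j_r)$. An $(r-1)$-subset of $\{1,\dots,rd\}$ that contains all of $j_{k+1},\dots,j_r$ and $k-1$ elements from $\{1,\dots,r\}$ can be completed to an $r$-hyperedge of $\Omega_1^{(r,d)}$ either by adding one more element of $\{1,\dots,r\}$ (keeping it inside the piece $\Gamma_{k,r}(j_{k+1},\dots,j_r)$, or moving to a piece of the same $k$ but that is impossible since $(j_{k+1},\dots,j_r)$ is already determined) or by adding an element $y>r$, which would land it in a piece $\Gamma_{k-1,r}(\text{sorted list of } j_{k+1},\dots,j_r,y)$. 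The point to verify is that by processing the pieces in order of \emph{decreasing} $k$, any such piece $\Gamma_{k-1,r}(\dots)$ with one more big index has not yet been added when we process $\Gamma_{k,r}(j_{k+1},\dots,j_r)$; hence the only already-present hyperedge of $\Omega_1^{(r,d)}$ having $\xi(f_i)$ as a face is $\psi(\sigma_i)$ itself (the fact that $f_i$ is fresh inside $\Gamma_{k,r}^{-a}$ rules out the in-piece completions by a big index, via the bijections $\psi$, $\xi$). This ordering argument, combined with Lemma \ref{lemma4}'s guarantee that the leaf-equivalence faces all lie in $JE_{r-1}$, is the heart of the proof; the rest is bookkeeping and invoking Lemmas \ref{lemmaC}, \ref{lemma2}, \ref{lemma3} and Remark \ref{rem211}.
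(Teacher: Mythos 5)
Your proposal is correct and follows essentially the same route as the paper: reduce to $b_{r-1}(\Omega_1^{(r,d)})=0$ via Lemmas \ref{lemma2}, \ref{lemma3} and Remark \ref{rem211}, then peel off the pieces $\Gamma_{k,r}(j_{k+1},\dots,j_r)$ using Lemma \ref{lemma4} and Lemma \ref{lemmaC}, with the same ordering (your ``add in decreasing $k$'' is the paper's ``remove in increasing $k$'') and the same key counting argument that a face in $JE_{r-1}$ of a $k$-piece, having exactly $k-1$ vertices in $\{1,\dots,r\}$, can only be completed within that same piece or inside a not-yet-added $(k-1)$-piece. The only quibble is the phrase ``rules out the in-piece completions by a big index'': completions by a big index leave the piece, while the in-piece completions (by a small index) are what the freshness of $f_i$ rules out — but this is a slip of wording, not of substance.
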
 
\begin{proof} We already know that $\mathcal{E}_d^{(r)}$ is homogeneous. Since the hypergraphs $\Omega_i^{(r,d)}$ and $\Omega_1^{(r,d)}$ are isomorphic, from Remark \ref{rem211} it is enough to prove that $b_{r-1}(\Omega_{1}^{(r,d)})=0$. For this we will show that $\Omega_{1}^{(r,d)}$ is leaf-equivalent to the empty $r$-uniform hypergraph, and then use Lemma \ref{lemmaC}.

Recall from Remark \ref{rem3} that  
\begin{eqnarray*}
E(\Omega_{1}^{(r,d)})=\bigcup_{1\leq k\leq r}\left(\bigcup_{r+1\leq j_{k+1}<\dots<j_r\leq rd}E(\Gamma_{k,r}(j_{k+1},\dots,j_{r}))\right).
\end{eqnarray*}
 We will show that  $\Omega_{1}^{(r,d)}$ is leaf-equivalent to the empty $r$-uniform hypergraph by removing each $\Gamma_{k,r}(j_{k+1},\dots,j_{r})$ one at the time. We will use induction over $k$.

If $k=1$ then for each $r+1\leq j_2<j_3<\dots<j_r\leq rd$ there exists a unique   $1\leq x\leq r$ such that $x+j_2+\dots+j_r=0~(mod~r)$. Consequently, there is exactly one hyperedge $\sigma=(x,j_2,\dots,j_r)\in E(\Omega_{1}^{(r,d)})$, which means that $E(\Gamma_{1,r}(j_{2},\dots,j_{r}))=\{\sigma\}$. In particular, we get that the $(r-1)$-face $(j_2,\dots,j_r)\in JE_{r-1}(\Gamma_{1,r}(j_{2},\dots,j_r))=\{(j_2,\dots,j_r)\}$ is not shared by any other hyperedge in $E(\Omega_{1}^{(r,d)})$. This means that $\Omega_{1}^{(r,d)}$ and $\Omega_{1}^{(r,d)}\setminus\{\sigma\}$ are leaf-equivalent via an $(r-1)$-face in $JE_{r-1}(\Gamma_{1,r}(j_{2},\dots,j_r))$.

Also, notice that if $(j_2,\dots,j_r)\neq (l_2,\dots,l_r)$, then  
$$JE_{r-1}(\Gamma_{1,r}(j_{2},\dots,j_r))\cap E_{r-1}(\Gamma_{1,r}(l_{2},\dots,l_r))=\emptyset.$$
Therefore, the process of removing $\Gamma_{1,r}(j_{2},\dots,j_{r})$ and $\Gamma_{1,r}(l_{2},\dots,l_{r})$ can be done independently. This implies that  we can can use leaf-equivalence to remove every hyperedge in $$\bigcup_{r+1\leq j_2<\dots<j_r\leq rd}\Gamma_{1,r}(j_{2},\dots,j_{r}).$$ Hence, 
$$\Omega_{1}^{(r,d)} ~\text{ and } ~\Omega_{1}^{(r,d)}\setminus \bigcup_{r+1\leq j_2<\dots,j_r\leq rd}\Gamma_{1,r}(j_{2},\dots,j_{r})$$ are leaf-equivalent.

Next, assume that the induction hypothesis is true for $k$. This means that 
$$\Omega_{1}^{(r,d)} ~~~ \text{  and  } ~~~ \Omega_{1}^{(r,d)}\setminus \bigcup_{t=1}^k\left( \bigcup_{r+1\leq j_{t+1}<\dots<j_r\leq rd}\Gamma_{t,r}(j_{t+1},\dots,j_{r})\right)$$ are leaf-equivalent. We want to show that we can use leaf-equivalence to further remove each $\Gamma_{k+1,r}(j_{k+2},\dots,j_{r})$. 

We know from Lemma \ref{lemma4}  that for every $r+1\leq j_{k+2}<\dots<j_r\leq rd$, the $r$-uniform hypergraph $\Gamma_{k+1,r}(j_{k+2},\dots,j_{r})$ is leaf-equivalent to the empty $r$-uniform hypergraph via 
$(r-1)$-faces of the type $(i_1,\dots, \widehat{i_s},\dots,i_{k+1},j_{k+2},\dots,j_r)\in JE_{r-1}(\Gamma_{k+1,r}(j_{k+2},\dots,j_r))$ (i.e. those $(r-1)$-faces that contain all the $j_{k+2},\dots, j_r$). 

Moreover, notice that  
$$JE_{r-1}(\Gamma_{k+1,r}(j_{k+2},\dots,j_r))\cap E_{r-1}(\Gamma_{t,r}(l_{t+1},\dots,l_r))=\emptyset$$
for any $t> k+1$, or  $t=k+1$ and $(l_{k+2},\dots,l_k)\neq (j_{k+2},\dots,j_k)$. 

Indeed, if $\sigma_{\widehat{s}}\in JE_{r-1}(\Gamma_{k+1,r}(j_{k+2},\dots,j_r))$ then $\sigma_{\widehat{s}}=(i_1,\dots,\widehat{i_s},\dots,i_{k+1},j_{k+2},\dots,j_r)$ where $1\leq i_1<\dots<i_{k+1}\leq r$. If $t>k+1$ then $\sigma_{\widehat{s}}$ cannot be an element in $E_{r-1}(\Gamma_{t,r}(l_{t+1},\dots,l_r))$  since it does not  have enough vertices in the set $\{1,\dots,r\}$. Moreover, when $t=k+1$ one can see that if $\sigma_{\widehat{s}}\in E_{r-1}(\Gamma_{k+1,r}(l_{k+2},\dots,l_r))$ then $(l_{k+2},\dots,l_k)=(j_{k+2},\dots,j_k)$. 

Next, since 
$$\Omega_{1}^{(r,d)}\setminus \left(\bigcup_{t=1}^k\left(\bigcup_{r+1\leq j_{t+1}<\dots<j_r\leq rd}\Gamma_{t,r}(j_{t+1},\dots,j_{r})\right)\right)=\bigcup_{k+1\leq t\leq r}\left(\bigcup_{r+1\leq j_{t+1}<\dots<j_r\leq rd}\Gamma_{t,r}(j_{t+1},\dots,j_{r})\right),$$
 we get that the leaf-equivalence from $\Gamma_{k+1,r}(j_{k+2},\dots,j_{r})$  to the empty $r$-uniform hypergraph (via $(r-1)$-faces from $JE_{r-1}(\Gamma_{k+1,r}(j_{k+2},\dots,j_r))$), induces a leaf-equivalence from $$\Omega_{1}^{(r,d)}\setminus \left(\bigcup_{t=1}^k\left(\bigcup_{r+1\leq j_{t+1}<\dots<j_r\leq rd}\Gamma_{t,r}(j_{t+1},\dots,j_{r})\right)\right),$$ to 
$$\Omega_{1}^{(r,d)}\setminus \left(\bigcup_{t=1}^k\left(\bigcup_{r+1\leq j_{t+1}<\dots<j_r\leq rd}\Gamma_{t,r}(j_{t+1},\dots,j_{r})\right)\right)\setminus \Gamma_{k+1,r}(j_{k+2},\dots,j_{r}).$$

%Finally, if  $(j_{k+2},\dots,j_k)\neq (l_{k+2},\dots,l_k)$ there is no $(r-1)$-face $\sigma_{\widehat{s}}\in E_{r-1}(\Gamma_{k+1,r}(j_{k+2},\dots,j_r))$ such that $\{ j_{k+1}, \dots, j_r \} \subseteq \sigma_{\widehat{s}}$, and $\sigma_{\widehat{s}}$ is also an $(r-1)$-face in $E_{r-1}(\Gamma_{k+1,r}(l_{k+2},\dots,l_r))$. 
Finally notice that the leaf-equivalence at step $k+1$ is done independently for each $(j_{k+2},\dots,j_{r})$ (i.e. we do not use the same $(r-1)$-faces). Therefore  the induction hypothesis is true for $k+1$, which proves our statement. 

\end{proof}

\begin{corollary}\label{Comain} The map $det^{S^r}$ introduced in \cite{ls2} is nontrivial. In particular $dim_k(\Lambda_{V_d}^{S^r}[rd])\geq 1$. 
\end{corollary}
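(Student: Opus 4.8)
The plan is to obtain this corollary as an immediate consequence of the two main results already in place. First I would invoke Theorem \ref{Th17r}, which states that $\mathcal{E}_d^{(r)}=(\Omega_1^{(r,d)},\dots,\Omega_d^{(r,d)})$ is an acyclic $d$-partition of $K_{rd}^{(r)}$. Since $\mathcal{E}_d^{(r)}$ is in particular a genuine hyperedge $d$-partition of $K_{rd}^{(r)}$, the tensor $\omega_{\mathcal{E}_d^{(r)}}\in V_d^{\otimes\binom{rd}{r}}$ is well defined.

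Next I would apply the implication $(2)\Rightarrow(1)$ of Theorem \ref{ThCombR} (from \cite{ls2}) to the acyclic partition $\mathcal{E}_d^{(r)}$. This yields $det^{S^r}(\omega_{\mathcal{E}_d^{(r)}})\neq 0$, so $det^{S^r}$ does not vanish identically; that is, $det^{S^r}$ is nontrivial, which in particular settles the case $r\geq 4$ that was left open in \cite{ls2}.

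Finally, recalling from Section \ref{section2} (following \cite{sta2,ls1}) that the existence of a nontrivial linear map $V_d^{\otimes\binom{rd}{r}}\to k$ satisfying the defining relations of $det^{S^r}$ is exactly what is needed to force $dim_k(\Lambda_{V_d}^{S^r}[rd])\geq 1$, the nontriviality just established gives $dim_k(\Lambda_{V_d}^{S^r}[rd])\geq 1$, which is the partial answer to the conjecture of \cite{ls2} announced in the introduction.

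I do not expect any genuine obstacle at this stage: all of the difficulty is concentrated upstream, in constructing $\mathcal{E}_d^{(r)}$, proving it homogeneous, and establishing its acyclicity via the leaf-equivalence machinery of Sections \ref{section3}--\ref{section5}. The only routine check is that $\mathcal{E}_d^{(r)}$ satisfies conditions (i)--(iii) of a hyperedge $d$-partition, so that both the construction of $\omega_{\mathcal{E}_d^{(r)}}$ and Theorem \ref{ThCombR} apply verbatim; this was already observed when $\mathcal{E}_d^{(r)}$ was introduced.
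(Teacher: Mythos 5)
Your proposal is correct and follows exactly the paper's own argument: combine Theorem \ref{Th17r} (existence of the acyclic partition $\mathcal{E}_d^{(r)}$) with the implication $(2)\Rightarrow(1)$ of Theorem \ref{ThCombR} to get $det^{S^r}(\omega_{\mathcal{E}_d^{(r)}})\neq 0$, and then cite the discussion in \cite{ls2} for the dimension bound. No gaps; this is the intended two-line deduction.
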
 
\begin{proof} It follows from Theorem \ref{ThCombR} and Theorem \ref{Th17r}. For the second part see the discussion in \cite{ls2}. 
\end{proof}

\begin{remark} When $r=2$ the graph $\Omega_1^{(2,d)}$ is the twin-star graph $TS_d$ on $2d$ vertices  from Figure \ref{fig3}  (see also \cite{fls}). Notice that  $TS_d$ consists of a central edge $(1,2)$, with the other $2d-2$ edges being rays connected  either to vertex $1$, or to vertex $2$. 

\begin{figure}[h]
\centering
\begin{tikzpicture}
  [scale=0.6,auto=left]%,every node/.style={shape = circle, draw, fill = white,minimum size = 14pt, inner sep=0.3pt}]%baseline=(a.center)]%{circle,fill=black}]
	%\tikzset{VertexStyle/.style = {shape = circle,fill = black,minimum size = 9mm,inner sep=2pt}}
	\node[shape=circle,draw=black,minimum size = 24pt,inner sep=0.3pt] (n1) at (3,0) {$1$};
	\node[shape=circle,draw=black,minimum size = 24pt,inner sep=0.3pt] (n2) at (11,0) {$2$};
	\node[shape=circle,draw=black,minimum size = 24pt,inner sep=0.3pt] (n3) at (8.35,1.3) {$4$};
  \node[shape=circle,draw=black,minimum size = 24pt,inner sep=0.3pt] (n4) at (5.65,1.3) {$3$};
	\node[shape=circle,draw=black,minimum size = 24pt,inner sep=0.3pt] (n5) at (9.9,2.5) {$6$};
  \node[shape=circle,draw=black,minimum size = 24pt,inner sep=0.3pt] (n6) at (4.1,2.5) {$5$};
	\node[shape=circle,draw=black,minimum size = 24pt,inner sep=0.3pt] (n51) at (12.1,2.5) {$8$};
  \node[shape=circle,draw=black,minimum size = 24pt,inner sep=0.3pt] (n61) at (1.9,2.5) {$7$};
	\node[shape=circle,draw=black,minimum size = 24pt,inner sep=0.3pt] (n71) at (12.1,-2.5) {{\tiny $2d-4$}};
  \node[shape=circle,draw=black,minimum size = 24pt,inner sep=0.3pt] (n81) at (1.9,-2.5) {{\tiny $2d-5$}};
	\node[shape=circle,draw=black,minimum size = 24pt,inner sep=0.3pt] (n7) at (9.9,-2.5) {{\tiny $2d-2$}};
  \node[shape=circle,draw=black,minimum size = 24pt,inner sep=0.3pt] (n8) at (4.1,-2.5) {{\tiny $2d-3$}};
	\node[shape=circle,draw=black,minimum size = 24pt,inner sep=0.3pt] (n9) at (8.35,-1.3) {{\tiny $2d$}};
  \node[shape=circle,draw=black,minimum size = 24pt,inner sep=0.3pt] (n10) at (5.65,-1.3) {{\tiny $2d-1$}};

	\node[shape=circle,minimum size = 24pt,inner sep=0.3pt] (m4) at (1.3,1) {{ \bf $\boldsymbol{\cdot}$}};
	\node[shape=circle,minimum size = 24pt,inner sep=0.3pt] (m4) at (1,0.3) {{ \bf $\boldsymbol{\cdot}$}};
	%\node[shape=circle,minimum size = 24pt,inner sep=0.3pt] (m4) at (-0.45,0) {{ \bf $\boldsymbol{\cdot}$}};
	\node[shape=circle,minimum size = 24pt,inner sep=0.3pt] (m4) at (1,-0.3) {{ \bf $\boldsymbol{\cdot}$}};
	\node[shape=circle,minimum size = 24pt,inner sep=0.3pt] (m4) at (1.3,-1) {{ \bf $\boldsymbol{\cdot}$}};
	
	\node[shape=circle,minimum size = 24pt,inner sep=0.3pt] (m4) at (12.7,1) {{ \bf $\boldsymbol{\cdot}$}};
	\node[shape=circle,minimum size = 24pt,inner sep=0.3pt] (m4) at (13,0.3) {{ \bf $\boldsymbol{\cdot}$}};
	%\node[shape=circle,minimum size = 24pt,inner sep=0.3pt] (m4) at (-0.45,0) {{ \bf $\boldsymbol{\cdot}$}};
	\node[shape=circle,minimum size = 24pt,inner sep=0.3pt] (m4) at (13,-0.3) {{ \bf $\boldsymbol{\cdot}$}};
	\node[shape=circle,minimum size = 24pt,inner sep=0.3pt] (m4) at (12.7,-1) {{ \bf $\boldsymbol{\cdot}$}};

	  \draw[line width=0.5mm,blue]  (n1) -- (n2)  ;
		\draw[line width=0.5mm,blue]  (n1) -- (n4)  ;
		\draw[line width=0.5mm,blue]  (n1) -- (n6)  ;
		\draw[line width=0.5mm,blue]  (n1) -- (n61)  ;
		\draw[line width=0.5mm,blue]  (n1) -- (n8)  ;
		\draw[line width=0.5mm,blue]  (n1) -- (n81)  ;
		\draw[line width=0.5mm,blue]  (n1) -- (n10)  ;
		\draw[line width=0.5mm,blue]  (n3) -- (n2)  ;
	  \draw[line width=0.5mm,blue]  (n5) -- (n2)  ;
		\draw[line width=0.5mm,blue]  (n51) -- (n2)  ;
	  \draw[line width=0.5mm,blue]  (n7) -- (n2)  ;
	  \draw[line width=0.5mm,blue]  (n71) -- (n2)  ;
		\draw[line width=0.5mm,blue]  (n9) -- (n2)  ;
		%\draw[line width=0.5mm,dashed]  (n2) -- (n3);	
\end{tikzpicture}
\caption{$TS_d$ the Twin-Star graph  with $2d$ vertices} \label{fig3}
\end{figure}
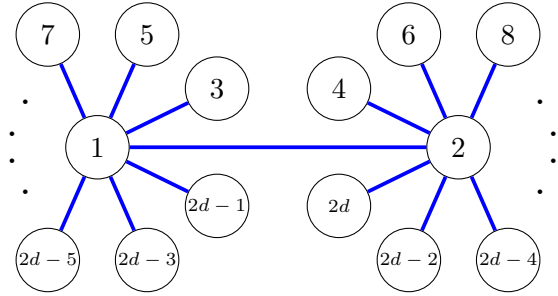

Similarly when \(r \geq 3\), the \(r\)-uniform hypergraph $\Omega_1^{(r,d)}$ consists of a central hyperedge $(1,\dots,r)$ along with the remaining hyperedges $\sigma\in \Omega_1^{(r,d)}$ having the property that
$$supp(\sigma)\cap \{1,2,\dots,r\} \neq \emptyset,$$
and so can be viewed as hyper-rays. Unlike for graphs, there are different flavors of hyper-rays depending on the cardinality of the set $supp(\sigma)\cap \{1,2,\dots,r\}$.  Using this analogy, it would be reasonable to call $\Omega_1^{(r,d)}$ an $r$-tuplet hyper-star on $rd$ vertices.  See Figure \ref{fig1} for a picture in the case $r=3$ and $d=2$.
\end{remark} 

\begin{remark} 
It is likely that $X(\Omega_1^{(r,d)})$ is a contractible space. However, leaf-equivalence does not seem to be a suitable approach to prove this, as one can see in Remark \ref{rembetti}.
\end{remark}

\section*{Statements and Declarations}

The authors have no relevant financial or non-financial interests to disclose. Data sharing is not applicable to this article as no data sets were generated or analyzed during the current study.

%\section*{Competing interests}
%There is no conflict of interest.

%\section*{Data availability}  
%Data sharing not applicable to this article as no datasets were generated or analysed during the current study.

\section*{Acknowledgment}
We thank  Abraham Orinda, Tanmoy Rudra, and Kadir Yucel for some discussions in the early stage of this project. We thank Sebastian Cioabă for some comments on the final version. 

\bibliographystyle{amsalpha}

\end{document}